\newtheorem{theorem}{Theorem}[section]
\newtheorem{lm}[theorem]{Lemma}
\newtheorem{tr}[theorem]{Theorem}
\newtheorem{rem}[theorem]{Remark}
\newtheorem{pr}[theorem]{Proposition}
\newtheorem{ex}[theorem]{Example}
\DeclareMathOperator{\GL}{GL}
\DeclareMathOperator{\Dist}{Dist}
\begin{document}
\title[A combinatorial approach to Donkin-Koppinen filtrations]{A combinatorial approach to Donkin-Koppinen filtrations of general linear supergroups}

\author{F.Marko}
\address{The Pennsylvania State University, 76 University Drive, Hazleton, 18202 PA, USA}
\email{fxm13@psu.edu}

\begin{abstract}
For a general linear supergroup $G=\GL(m|n)$, we consider a natural isomorphism $\phi: G \to U^-\times G_{ev} \times U^+$, where $G_{ev}$ is the even subsupergroup of $G$, and $U^-$, $U^+$ are appropriate odd unipotent subsupergroups of $G$. We compute the action of odd superderivations on the images $\phi^*(x_{ij})$ of the generators of $K[G]$, 
extending results established in \cite{prim} and \cite{irred}.

We describe a specific ordering of the dominant weights $X(T)^+$ of $GL(m|n)$ for which there exists a Donkin-Koppinen filtration of the coordinate algebra $K[G]$.
Let $\Gamma$ be a finitely generated ideal $\Gamma$ of $X(T)^+$ and
$O_{\Gamma}(K[G])$ be the largest $\Gamma$-subsupermodule of $K[G]$ having simple composition factors of highest weights $\lambda\in \Gamma$.
We apply combinatorial techniques, using generalized bideterminants, to determine a basis of $G$-superbimodules appearing in Donkin-Koppinen filtration of $O_{\Gamma}(K[G])$, considered initially in \cite{markozub3}.
\end{abstract}

\maketitle

\section*{Introduction and Donkin-Koppinen filtrations}

\subsection*{Algebraic group $G$}

Let $G$ be a (connected) reductive affine algebraic group over an algebraically closed field K, and $K[G]$ be its coordinate algebra. 
Let $\Phi^+$ be the set of positive roots of $G$, and $X(T)^+$ be the corresponding set of dominant weights of $G$.
The set $X(T)^+$ is considered with respect to the Bruhat-Tits dominance order $\unlhd$ such that  $\mu\unlhd \lambda$ if and only if $\lambda-\mu$ is a sum of positive roots from $\Phi$. Denote by $w_0$ the longest element of the Weyl group of $G$, 
and for $\lambda\in X(T)^+$, denote $\lambda^*=-w_0 \lambda$. 
Denote by $H^0_G(\lambda)$ the induced module of the highest weight $\lambda$. Then 
$H^0_G(\lambda^*)$ is isomorphic to the dual of the Weyl module $V_G(\lambda)$ of the highest weight $\lambda$.

Regard $K[G]$ as a left $G\times G$-module via the action 
\[(g_1,g_2)(f)(g)=f(g_2^{-1}gg_1)\]
for $g_1,g_2,g\in G$ and $f\in K[G]$.

Let $\Gamma$ be a finite ideal of the set $X(T)^+$ dominant weights of $G$. 
For a (left) $G$-module $V$, denote by $O_{\Gamma}(V)$ the maximal submodule of $V$ such that all its composition factors are of irreducible modules $L(\lambda)$ with $\lambda\in \Gamma$.

List elements $\lambda_1, \lambda_2, \ldots, \lambda_i, \ldots$ of $X(T)^+$ is such a way that $\lambda_i\unlhd \lambda_j$ implies $i\leq j$, and denote the ideal $\{\lambda_1, \ldots, \lambda_m\}$ of $X(T)^+$ by $\pi_m$.

Donkin (see p. 472 of \cite{don}) proved that $K[G]$ has a good filtration by $G\times G$-submodules $V_m=O_{\pi_m}(K[G])$
such that the quotients 
$V_m/V_{m-1} \simeq H^0_G(\lambda_m) \otimes H^0_G(\lambda_m^*)$ for each $m\geq 1$.
An analogous result about the filtration of $O_{\Gamma}(K[G])$ for every ideal $\Gamma$ of $X(T)^+$ is also valid.
We will call the above filtration a {\it Donkin-Koppinen filtration}. 

Koppinen in \cite{kop} proved the existence of such filtrations using a slightly different notation that will be more suitable for this paper.

If $V$ is a left $G$-module, then denote by $V^r$ the corresponding right $G$-module, where the right action of $G$ on the $K$-space $V$ is given by 
$v.g= g^{-1}.v$ for $g\in G$ and $v\in V$.
There is a bijective correspondence between the (left) $G\times G$-module structure on a $K$-space $V$ and the (left-right) $G$-bimodule structure on $V$ given by
$(g_1,g_2)v=g_1vg_2^{-1}$ for $g_1,g_2\in G$ and $v \in V$.
Following \cite{kop}, if $V=V_1\otimes V_2$ and $V_1, V_2$ are (left) $G$-modules, we write $V_1\otimes V_2^r$ for the corresponding (left-right) $G$-bimodule. 
Thus, the terms in the good filtration of the $G$-bimodule $K[G]$ are $G$-bimodules and the consecutive quotients in this filtration are isomorphic to $H^0_G(\lambda_m)\otimes H^0_G(\lambda_m^*)^r$ as $G$-bimodules.

Both Donkin and Koppinen used methods of homological algebra.

\subsection*{Schur algebras and general linear groups}

In the case when $G=GL(m)$ is the general linear group, the polynomial representations correspond to modules over the Schur algebra $S=S(m,r)$ of degree $r\geq 0$.
The Schur algebra $S$ is a dual of the bialgebra of polynomials $A(m,r)$ of degree $r$.
Denote by $M_S(\lambda)$ the left Schur $S$-module of the highest weight $\lambda$, which is isomorphic to the left costandard module $\nabla_S(\lambda)$ of the highest weight $\lambda$.
Also, denote by $V_S(\lambda)$ the left Weyl $S$-module of the highest weight $\lambda$, which is isomorphic to the
left standard module $\Delta_S(\lambda)$ of the highest weight $\lambda$.

It is a standard result (see \cite{martin}) that $A(m,r)$ has a filtration by (left and right) $S$-bimodules 
$A_{\leq \lambda}$ that are given as a $K$-span of bideterminants of shape $\mu \unlhd \lambda$. 
The factorbimodules $A_{\leq \lambda}/A_{<\lambda}$ are isomorphic to $M_S(\lambda)\otimes V_S(\lambda)^*\simeq \nabla_S(\lambda)\otimes \Delta_S(\lambda)^*$ as $S$-bimodules, 
where $*$ denotes the $Hom(-,K)$ dual. 
Here $V_S(\lambda)^*$ is isomorphic to the right Schur (costandard) $S$-module of the highest weight $\lambda$.

The algebra $S$ is quasi-hereditary, and it has a filtration by heredity chains with quotients 
$M_S(\lambda)^*\otimes V_S(\lambda) \simeq \nabla_S(\lambda)^*\otimes \Delta_S(\lambda)$. This heredity chain filtration of $S(m,r)$ is dual to the above-described filtration of $A(m,r)$.

If $\mu $ is a dominant weight of $G=GL(m)$, then up to a tensoring by a suitable power of determinant $D=Det$ of $G$, the left induced $\GL(m)$-module $H^0_G(\mu)$ is obtained from a Schur (costandard) module $M_S(\lambda)$ of a corresponding polynomial weight $\lambda$. 
Again, up to a shift by the determinant $D$, the left induced $\GL(m)$-module $H^0_G(\mu^*)$ is isomorphic to $H^0_G(\lambda^*)$ for a corresponding polynomial weight $\lambda$. The module $H^0_G(\lambda^*)$ is isomorphic to the dual $V_G(\lambda)^*$ of the Weyl module $V_G(\lambda)$  (see p.182 of \cite{jan} and p.67 of \cite{martin}). 
Note that $V_G(\lambda)^*$ is a left $G$-module, and $(V_G(\lambda)^*)^r \simeq V_S(\lambda)^*$ is a right $S$-module. 
Thus,
the $G$-bimodule $H^0_G(\mu)\otimes H^0_G(\mu^*)^r$ is isomorphic, up to a shift
by a power of $D$, to the $S$-bimodule $M_S(\lambda)\otimes V_S(\lambda)^*$, which is described combinatorially using bideterminants.  

\subsection*{General linear supergroup $GL(m|n)$}
Let $G=GL(m|n)$ be a general linear supergroup, and $X(T)^-$ be a set of dominant weights of $G$ corresponding to the set of negative roots $\Phi^-$ of $G$.

Let $\Lambda$ be a finitely-generated ideal in $X^-\times X(T)^+$, and $O_{\Lambda}(K[G])$ be the largest $G\times G$-subsupermodule of $K[G]$ such that all of its composition factors are simple $G\times G$-supermodules of a highest weight $\lambda\in \Lambda$.

Theorem 6.1 of \cite{sz} states that for every finitely generated ideal $\Lambda\subseteq X^-\times X(T)^+$, the $G\times G$-supermodule $O_{\Lambda}(K[G])$ has a decreasing good filtration
\[O_{\Lambda}(K[G])=V_0\supseteq V_1\supseteq V_2\supseteq\ldots\]
such that 
\[V_k/V_{k+1}\simeq V_G(\lambda_k)^*\otimes H^0_G(\lambda_k)\]
as $G\times G$-supermodules for $k\geq 0$. 

By Proposition 5.2 of \cite{markozub3}, if $\Lambda= (-\Gamma)\times \Gamma$, where $ \Gamma$ is a finitely generated ideal of $X(T)^+$ of $G$, then $O_{\Lambda}(K[G])=O_{\Gamma}(K[G])$.

Let $G_{ev}\simeq GL(m)\times GL(n)$ be the even subgroup of $GL(m|n)$.
For an ideal $\Gamma$ of $X(T)^+$, denote by $M_{\Gamma}$ the $G_{ev}$-module $O_{\Gamma}(K[G_{ev}])$, 
and by $C_{\Gamma}$ the $G$-supermodule $O_{\Gamma}(K[G])$. 

If $\lambda$ is a maximal element of $\Gamma$, then the factormodule $M_{\Gamma}/M_{\Gamma\setminus\{\lambda\}}$ has a $K$-basis 
consisting of generalized bideterminants in the terminology of \cite{markozub3}. Using this, 
an explicit basis of the factorsupermodules $C_{\Gamma}/C_{\Gamma\setminus\{\lambda\}}$ was given in Theorem 7.3 of \cite{markozub3}.

\subsection*{Goals of the paper}

It is natural to ask if the $G=GL(m|n)$-supermodules $C_{\Gamma}$ can be described purely combinatorially using the actions of the left and right superderivations. 
The affirmative answer to this question is the primary goal of this paper. To accomplish this, 
we consider a natural isomorphism $\phi: G \to U^-\times G_{ev} \times U^+$, where $G_{ev}$ is the even subsupergroup of $G$, and $U^-$, $U^+$ are appropriate odd unipotent subsupergroups of $G$. 
We describe an explicit ordering of the dominant weights of $GL(m|n)$ and completely describe the action of superderivations on the images $\phi^*(x_{ij})$ of the generators of $K[G]$, 
which is of independent interest and is an extension of the results from \cite{irred} and \cite{prim}.
That way, we obtain a hands-on description of the Donkin-Koppinen filtration for $O_{\Gamma}(K[G])$ for $G=\GL(m|n)$ considered initially in \cite{markozub3}.

The structure of the paper is as follows. 

In Section 1, we introduce additional notation specialized for $G=GL(m|n)$. 
In Section 2, we explicitly describe Donkin-Koppinen filtrations for $G=GL(1|1)$.
In Section 3, we determine a basis of the modules $M_{\Gamma}=O_{\Gamma}(K[G_{ev}])$ for $GL(m|n)$.
In Section 4, we determine the action of superderivations on element $\phi^*(x_{ij})$, find a basis of $C_{\Gamma}=O_{\Gamma}(K[G])$ for $GL(m|n)$ and Donkin-Koppinen filtration of $O_{\Gamma}(K[G])$.

\section{Additional notation}

For general notation and basic setup involving supergroups and their representations, standard and costandard supermodules, please consult \cite{brunkuj, zub1, zub3}.

Let $X$ denote the \emph{generic matrix} $(x_{ij})_{1\leq i, j\leq m+n}$, represented by blocks as
\[X=\left(\begin{array}{cc}
X_{11} & X_{12} \\
X_{21} & X_{22}
\end{array}\right),\]
where
$X_{11}=(x_{ij})_{1\leq i, j\leq m}$, $X_{12}=(x_{ij})_{1\leq i\leq m < j\leq m+n}$,
$X_{21}=(x_{ij})_{1\leq j\leq m < i\leq m+n}$ and $X_{22}=(x_{ij})_{m< i, j\leq m+n}$.
The variables $x_{ij}$, where $1\leq i,j\leq m$ or $m+1\leq i,j \leq m+n$ are designated as even ($|x_{ij}=0|$), 
and the remaining variables $x_{ij}$ are designated as odd ($|x_{ij}|=1$.)
We denote $D_1=det(X_{11})$ and $D_2=det(X_{22})$.

Denote by $K[GL(m|n)]$ the Hopf superalgebra
\[K[x_{ij}\mid 1\leq i, j\leq m+n]_{D_1D_2},\]
such that its comultiplication is defined by
\[\Delta_{\GL(m|n)}(x_{ij})=\sum_{1\leq k\leq m+n} x_{ik}\otimes x_{kj}\]
and its counit is defined as
\[ \epsilon_{\GL(m|n)}(x_{ij})=\delta_{ij}.\]

The \emph{general linear supergroup} $\GL(m|n)$ is a functor from the category of commutative superalgebras $SAlg_K$ 
to the category of groups such that 
\[\GL(m|n)(A) = Hom_{SAlg_K}(K[\GL(m|n)], A)\] for $A\in SAlg_K$.
From now, we write $G$ in place of $GL(m|n)$.

Denote $\ker\epsilon_G=K[G]^+$ by $\mathfrak{m}$. For every integer $t\geq 0$, denote the space
$(K[G]/\mathfrak{m}^{t+1})^*$ by $\Dist_t(G)$.
The \emph{distribution superalgebra} $\Dist(G)$ of $G$ is defined as $\Dist(G)=\cup_{t\geq 0} \Dist_t(G)\subseteq K[G]^*$. 

The superspace $\Dist_1(G)^+ =(\mathfrak{m}/\mathfrak{m}^2)^*$ is identified with the Lie superalgebra 
$\mathfrak{g}=\mathfrak{gl}(m|n)$ of $G=GL(m|n)$. Denote by $e_{ij}$ the basis element of $\mathfrak{g}$ corresponding to $x_{ij}$.  

The superalgebra $K[G]$ has a natural structure of a left $G$-supermodule via the left representation $\rho_l$ of $G$. 
Another $G$-supermodule structure on $G$ is given by the right regular representation $\rho_r$ of $G$. 

The left (and right, respectively) action of $e_{ij}$ on the supermodule $K[G]$ is given by the right (and left, respectively) superderivation $_{ij}D$ (and $D_{ij}$, respectively). These actions are determined by 
$(x_{kl}){_{ij}D}=\delta_{ji}x_{kj}$ and $D_{ij}(x_{kl})=\delta_{jk} x_{il}$.

We consider $K[G]$ as a left $G$-supermodule via $\rho_l$, and as a $G\times G$-supermodule via $\rho_l\times \rho_r$.

Let $G_{ev}$ be the maximal even subsupersubgroup of $G$, and $U^-$ and $U^+$ be the purely odd unipotent subsupergoups of $G$ such that $\phi: G\to U^-\times G_{ev} \times U^+$ is the isomorphism from Section 6 of \cite{markozub3} given by 
\[\left(\begin{array}{cc}
A_{11} & A_{12} \\
A_{21} & A_{22}
\end{array}\right)\stackrel{\phi}{\mapsto} \]
\[\left(\begin{array}{cc}
I_m & 0 \\
A_{21}A_{11}^{-1} & I_m
\end{array}\right)\times \left(\begin{array}{cc}
A_{11} & 0 \\
0 & A_{22}-A_{21}A_{11}^{-1}A_{12}
\end{array}\right)\times \left(\begin{array}{cc}
I_m &A_{11}^{-1} A_{12} \\
0 & I_n
\end{array}\right).\]
The dual isomorphism of coordinate superalgebras 
\[\phi^* : K[U^-]\otimes K[G_{ev}]\otimes K[U^+]\to K[G]\]
is defined by
\[X_{21}\mapsto X_{21}X_{11}^{-1}, \quad X_{11}\mapsto X_{11}, \quad X_{22}\mapsto X_{22}-X_{21}X_{11}^{-1}X_{12}, \text{ and } 
X_{12}\mapsto X_{11}^{-1}X_{12}.\]
In what follows, we write $y_{ij}=\phi^*(x_{ij})$.

Every ideal $\Gamma$ of $X(T)^+$ decomposes as a direct sum $\Gamma=\oplus_{r\in \mathbb{Z}} \Gamma_r$, 
where $\Gamma_r$ consists of those elements $\lambda\in \Gamma$ that have length $|\lambda|=r$.
Corresponding to this decomposition of $\Gamma$, we obtain the decomposition of 
$M_{\Gamma}=O_{\Gamma}(K[G_{ev}])=\oplus_{r\in \mathbb{Z}} O_{\Gamma_r}(K[G_{ev}])$ and 
$C_{\Gamma}=O_{\Gamma}(K[G])=\oplus_{r\in \mathbb{Z}} O_{\Gamma_r}(K[G])$.
Therefore, from now on, we will assume that $\Gamma=\Gamma_r$ for some integer $r$.

\section{Donkin-Koppinen filtrations for $G=\GL(1|1)$}

Before we describe a basis of the supermodules in Donkin-Koppinen filtrations for $\GL(m|n)$ in general, we discuss the simplest example when $m=n=1$.

Every finitely generated ideal $\Gamma=\Gamma_r$ is principal, and generated by $\lambda=(k, r-k)$ for some integer $k$. We will explicitly describe the filtration for $C_{\Gamma}=C_{(\lambda]}$.

From the definition of the morphism $\phi^*$ we get 
\[y_{11}=\phi^*(x_{11})=x_{11}, y_{12}=\phi^*(x_{12})=x_{11}^{-1}x_{12}, y_{21}=\phi^*(x_{21})=x_{11}^{-1}x_{12}\]
and 
\[y_{22}=\phi^*(x_{22})=x_{22}-x_{21}x_{11}^{-1}x_{12}.\]

The next lemma describes the action of superderivations on elements $y_{ij}$ as above.

\begin{lm} We have the following formulae.
\[\begin{aligned}&(y_{11})_{12}D=y_{11}y_{12}, &(y_{11})_{21}D=0, & \quad D_{21}(y_{11})=y_{21}y_{11}, & D_{12}(y_{11})=0;\\
&(y_{12})_{12}D=0,&(y_{12})_{21}D=1,& \quad D_{21}(y_{12})=y_{11}^{-1}y_{22},& D_{12}(y_{12})=0;\\
&(y_{21})_{12}D=y_{11}^{-1}y_{22},&(y_{21})_{21}D=0,&\quad D_{21}(y_{21})=0,& D_{12}(y_{21})=1;\\
&(y_{22})_{12}D=y_{22}y_{12},&(y_{22})_{21}D=0, &\quad D_{21}(y_{22})=y_{21}y_{22}, & D_{12}(y_{22})=0.
\end{aligned}\] 
\end{lm}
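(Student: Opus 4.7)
The plan is to verify all sixteen identities by direct computation, using (i) the graded Leibniz rule for the odd superderivations ${}_{12}D$, ${}_{21}D$, $D_{12}$, $D_{21}$, (ii) the base formulas for the action of ${}_{ij}D$ and $D_{ij}$ on the generators $x_{kl}$ recorded in Section 1, and (iii) the defining substitution $y_{11}=x_{11}$, $y_{12}=x_{11}^{-1}x_{12}$, $y_{21}=x_{21}x_{11}^{-1}$, $y_{22}=x_{22}-x_{21}x_{11}^{-1}x_{12}$ coming from $\phi^*$.

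First I would record the action of each of the four superderivations on $x_{11}^{-1}$. Since every $\partial \in \{\,{}_{12}D,\,{}_{21}D,\,D_{12},\,D_{21}\,\}$ is a (super)derivation and $x_{11}$ is even, applying $\partial$ to the identity $x_{11}\cdot x_{11}^{-1}=1$ yields $\partial(x_{11}^{-1}) = -x_{11}^{-1}\,\partial(x_{11})\,x_{11}^{-1}$. Combined with the base formulas this gives, for instance, $(x_{11}^{-1}){}_{12}D = -x_{11}^{-1}x_{12}x_{11}^{-1}$ and $D_{21}(x_{11}^{-1}) = -x_{11}^{-1}x_{21}x_{11}^{-1}$, while $(x_{11}^{-1}){}_{21}D = D_{12}(x_{11}^{-1}) = 0$. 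The first line of the table (the action on $y_{11}=x_{11}$) is then immediate: it is a restatement of the base formulas rewritten in terms of the $y_{ij}$. For the other three lines I would expand $\partial(y_{ij})$ term by term using the graded Leibniz rule applied to the products $x_{11}^{-1}x_{12}$, $x_{21}x_{11}^{-1}$ and $x_{22}-x_{21}x_{11}^{-1}x_{12}$, keeping track of the sign $(-1)^{|\partial||f|}$ that appears whenever an odd $\partial$ passes over an odd factor $f$, and then reinvert the substitution to express the answer in the $y_{ij}$.

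The only feature that lifts this above pure bookkeeping is the repeated use of the nilpotency $x_{12}^2 = x_{21}^2 = 0$ of the odd generators, together with the fact that the even $x_{11}^{-1}$ is central and can be moved past an odd factor freely. Without these two observations, the Leibniz expansion of, say, $(y_{12}){}_{12}D$ would leave a residual term $-x_{11}^{-1}x_{12}x_{11}^{-1}x_{12}$; after commuting $x_{11}^{-1}$ past $x_{12}$ this becomes $-x_{11}^{-2}x_{12}^2=0$. The same mechanism is what forces the entries $(y_{21}){}_{21}D$, $(y_{22}){}_{12}D$, $(y_{22}){}_{21}D$, $D_{12}(y_{22})$, and $D_{21}(y_{22})$ to collapse to the clean expressions stated in the lemma (in the $y_{22}$ computations one has to see an exact cancellation between the derivative of $x_{22}$ and the derivative of the correction term $-x_{21}x_{11}^{-1}x_{12}$, which again rests on $x_{12}^2=x_{21}^2=0$). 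Beyond this there is no conceptual obstacle; the main care needed is in the signs produced by the graded Leibniz rule and in consistently using the odd-nilpotency to close up the identities.
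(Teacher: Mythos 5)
Your proposal is correct and is precisely the direct verification the paper has in mind — the paper's proof is simply the one‑line ``Straightforward from the definitions,'' and your expansion via the graded Leibniz rule, the inverse rule $\partial(x_{11}^{-1})=-x_{11}^{-1}\partial(x_{11})x_{11}^{-1}$, and the odd nilpotency $x_{12}^2=x_{21}^2=0$ fills in exactly the bookkeeping that makes it straightforward.
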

\begin{proof}
Straightforward from the definitions.
\end{proof}

Let $\pi=(1|-1)$, and fix $\mu=(i|j)$ such that $i+j=r$. 
Denote 
\[A_{\mu}=y_{11}^iy_{22}^j, B_{\mu}=y_{11}^iy_{22}^jy_{12}, C_{\mu}=y_{11}^{i}y_{22}^{j}y_{21}, D_{\mu}=y_{11}^{i}y_{22}^{j}y_{12}y_{21}.\]

Using the above lemma, we obtain the following. 

\begin{lm}\label{l2}
\[\begin{aligned}
&(A_{\mu})_{12}D\!\!&=&rB_{\mu}, &(A_{\mu})_{21}D&=\!\!&0, \\
&\quad D_{21} (A_{\mu})\!\!&=&rC_{\mu}, &D_{12}(A_{\mu})&=\!\!&0;\\
&(B_{\mu})_{12}D\!\!&=&0, &(B_{\mu})_{21}D&=\!\!&A_{\mu},  \\
&\quad D_{21} (B_{\mu})\!\!&=&A_{\mu-\pi}-rD_{\mu}, &D_{12}(B_{\mu})&=\!\!&0;\\
&(C_{\mu})_{12}D\!\!&=&A_{\mu-\pi}-rD_{\mu}, &(C_{\mu})_{21}D&=\!\!&0, \\
&\quad D_{21} (C_{\mu})\!\!&=&0, &D_{12}(C_{\mu})&=\!\!&A_{\mu};\\
&(D_{\mu})_{12}D\!\!&=&B_{\mu-\pi}, &(D_{\mu})_{21}D&=\!\!&-C_{\mu}, \\
&\quad D_{21}(D_{\mu})\!\!&=&C_{\mu-\pi},
&D_{12}(D_{\mu})&=\!\!&-B_{\mu}.
\end{aligned}\]
\end{lm}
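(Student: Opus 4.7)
The plan is to reduce everything to the preceding lemma by repeated application of the graded Leibniz rule. Recall that the coordinate superalgebra $K[G]$ is supercommutative, so $y_{11}$ and $y_{22}$ are central, while $y_{12}^2=y_{21}^2=0$ and $y_{12}y_{21}=-y_{21}y_{12}$. All four operators $_{12}D,\ _{21}D,\ D_{12},\ D_{21}$ are odd super\-derivations, so each application carries a Koszul sign depending on the parity of the factor being skipped over.

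I would first dispose of $A_{\mu}=y_{11}^{i}y_{22}^{j}$. Since $(y_{11}){_{21}D}=(y_{22}){_{21}D}=0$ and $D_{12}(y_{11})=D_{12}(y_{22})=0$, iterating the Leibniz rule immediately gives $(A_{\mu}){_{21}D}=0$ and $D_{12}(A_{\mu})=0$. For $_{12}D$, Leibniz applied to $y_{11}^i$ produces $i$ equal copies of $y_{11}^{i-1}(y_{11}y_{12})=y_{11}^{i}y_{12}$, and likewise $j$ copies of $y_{22}^{j}y_{12}$ come from $y_{22}^{j}$; since $y_{12}$ is odd and the remaining factors even, everything collects to $(i+j)y_{11}^{i}y_{22}^{j}y_{12}=rB_{\mu}$. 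The formula $D_{21}(A_{\mu})=rC_{\mu}$ is completely analogous. Next, write $B_{\mu}=A_{\mu}y_{12}$ and $C_{\mu}=A_{\mu}y_{21}$ and apply Leibniz once more, using the base values $(y_{12}){_{21}D}=1$, $D_{21}(y_{12})=y_{11}^{-1}y_{22}$, $(y_{21}){_{12}D}=y_{11}^{-1}y_{22}$, $D_{12}(y_{21})=1$. For example, $D_{21}(B_{\mu})=D_{21}(A_{\mu})y_{12}+A_{\mu}D_{21}(y_{12})=rC_{\mu}y_{12}+y_{11}^{i-1}y_{22}^{j+1}$, and using $y_{21}y_{12}=-y_{12}y_{21}$ the first term becomes $-rD_{\mu}$ and the second is exactly $A_{\mu-\pi}$ because $\mu-\pi=(i-1,j+1)$. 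The companion identity $(C_{\mu}){_{12}D}=A_{\mu-\pi}-rD_{\mu}$ follows by the same mechanism with the roles of $y_{12}$ and $y_{21}$ (and of left/right action) swapped.

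Finally, for $D_{\mu}=A_{\mu}y_{12}y_{21}$ I would apply Leibniz to the triple product, simplifying aggressively by $y_{12}^{2}=y_{21}^{2}=0$. After these cancellations exactly two contributions survive for each operator: the derivation either hits $A_{\mu}$ and the resulting $y_{12}$ (or $y_{21}$) collides with an existing factor (killing that term), or it hits one of the two odd factors. For $D_{21}$ this leaves only $A_{\mu}D_{21}(y_{12})\cdot y_{21}=y_{11}^{-1}y_{22}\cdot A_{\mu}y_{21}=C_{\mu-\pi}$, and symmetrically for the other three operators, yielding the $\pm B$, $\pm C$ entries of the table. The expected obstacle is bookkeeping: keeping the Koszul signs for left versus right odd super\-derivations straight when moving them past an odd factor, and ordering the odd variables consistently so that the final expressions match the ones written in terms of $B_{\mu}, C_{\mu}, D_{\mu}$; everything else is a mechanical consequence of the previous lemma.
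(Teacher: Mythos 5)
Your proof is correct, and it is surely the intended one: the paper explicitly leaves the verification to the reader, and the only sensible route is iterated application of the odd Leibniz rule, seeded by the preceding lemma. I checked each of the sixteen entries against your scheme and they all come out right, including the sign-sensitive ones. The one point worth making completely explicit, which you flag but do not spell out, is that the two kinds of odd superderivations obey mirror-image Leibniz rules: the left superderivation satisfies $D_{kl}(fg)=D_{kl}(f)g+(-1)^{|f|}fD_{kl}(g)$, while the right superderivation ${}_{kl}D$ satisfies $(fg){_{kl}D}=f\cdot(g){_{kl}D}+(-1)^{|g|}(f){_{kl}D}\cdot g$ (it ``enters from the right''). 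This asymmetry is exactly what produces, say, $(C_{\mu}){_{12}D}=A_{\mu}(y_{21}){_{12}D}-(A_{\mu}){_{12}D}\,y_{21}=A_{\mu-\pi}-rD_{\mu}$ rather than $A_{\mu-\pi}+rD_{\mu}$; with the wrong convention the $rD_{\mu}$ term would flip sign. Your treatment of $D_{\mu}$, noting that the term in which the derivation hits $A_{\mu}$ dies because the emitted odd factor squares against an existing one, is a clean way to organize the bookkeeping.
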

\begin{proof}
The proof is left for the reader.
\end{proof}

\begin{pr}
Let $\lambda=(k,r-k)$, $\Gamma=(\lambda]$ and $\Lambda=(-\Gamma)\times \Gamma$. Then the superbimodules $V_i $ of the Donkin-Koppinen filtration of $O_{\Gamma}(K[G])$ are given as 
the $K$-span of monomials $A_{\lambda-u\pi}$, $B_{\lambda-u\pi}$,
$C_{\lambda-u\pi}$ and $D_{\lambda-u\pi}$ for $i\leq u$.
\end{pr}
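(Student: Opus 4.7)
The plan is to verify three claims: (I) each $V_i$ is a $G\times G$-subsupermodule of $K[G]$; (II) $V_0$ coincides with $O_\Gamma(K[G])$; and (III) the subquotient $V_i/V_{i+1}$ is isomorphic to $V_G(\lambda-i\pi)^*\otimes H^0_G(\lambda-i\pi)$ as a $G\times G$-supermodule. Together with the existence of the Donkin-Koppinen filtration recalled from Theorem 6.1 of \cite{sz}, these three facts identify our explicit chain with the Donkin-Koppinen filtration of $O_\Gamma(K[G])$.

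For (I), observe that every $A_\mu, B_\mu, C_\mu, D_\mu$ is a $T\times T$-weight vector, so the maximal torus preserves $V_i$ automatically. The remainder of $\Dist(G)\otimes\Dist(G)$ is generated by the four odd superderivations $_{12}D, _{21}D, D_{12}, D_{21}$. Lemma \ref{l2} expresses each such derivation applied to any of $A_\mu, B_\mu, C_\mu, D_\mu$ (with $\mu=\lambda-u\pi$) as a linear combination of $A, B, C, D$ evaluated at $\mu$ or at $\mu-\pi = \lambda-(u+1)\pi$. In either case the new index still satisfies $u\geq i$, so $V_i$ is stable under the distribution superalgebra and hence under $G\times G$.

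For (III), reduce the formulas of Lemma \ref{l2} modulo $V_{i+1}$, which annihilates all of the ``$\mu-\pi$'' terms. In the resulting 4-dimensional quotient, $A_\mu$ is the unique (up to scalar) vector killed simultaneously by $D_{12}$ and $_{21}D$, and its $T\times T$-weight matches the extremal weight of $V_G(\mu)^*\otimes H^0_G(\mu)$ relative to the Borel determined by $\phi$. Applying the remaining three odd superderivations to $A_\mu$ recovers $B_\mu, C_\mu$, and $D_\mu$ (with invertible scalars when $r\neq 0$), and the full table of structure constants among $A_\mu, B_\mu, C_\mu, D_\mu$ reproduces the standard presentation of the tensor product $V_G(\mu)^*\otimes H^0_G(\mu)$.

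For (II), a direct weight computation shows that every $A_\mu, B_\mu, C_\mu, D_\mu$ (with $\mu = \lambda-u\pi$ and $u\geq 0$) sits in a $T\times T$-weight space of $\Lambda = (-\Gamma)\times\Gamma$, so $V_0 \subseteq O_\Lambda(K[G])=O_\Gamma(K[G])$ by Proposition 5.2 of \cite{markozub3}. Combining (I)--(III) with Theorem 6.1 of \cite{sz}, the subquotients of $V_\bullet$ match those of the Donkin-Koppinen filtration at every level, forcing $V_0 = O_\Gamma(K[G])$ and termwise agreement of the two filtrations. The main obstacle I expect is (III): the clean identification of the 4-dimensional quotient with $V_G(\mu)^*\otimes H^0_G(\mu)$. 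The bookkeeping of signs and scalars in Lemma \ref{l2} is delicate, and the atypical case $r=|\mu|=0$ requires particular care, since many of the coefficients $r$ in Lemma \ref{l2} then vanish and the module structure must be read off the remaining non-zero relations alone.
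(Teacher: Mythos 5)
Your breakdown into (I)--(III) is a reasonable skeleton, and for $p\nmid r$ your sketch in (III) reproduces the substance of the paper's argument: $A_\mu$ is annihilated by ${}_{21}D$ and $D_{12}$, while ${}_{12}D$ and $D_{21}$ send it to $rB_\mu$ and $rC_\mu$, and composing gives $-r^2 D_\mu$ modulo $V_{i+1}$, so $A_\mu$ generates the $4$-dimensional quotient bimodule and the identification with $H^0_G(\mu)\otimes(V_G(\mu)^*)^r$ follows by matching weights and structure constants with the (semisimple) tensor product $L_G(\mu)\otimes L_G(\mu)$. Part (I) matches what the paper does implicitly, and (II) is workable once (III) is in place, although you should argue about highest weights of composition factors rather than mere weight-space supports.

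The genuine gap is the case $p\mid r$, which you acknowledge as delicate but do not carry out. There all four of $(A_\mu){}_{12}D$, $(A_\mu){}_{21}D$, $D_{12}(A_\mu)$, $D_{21}(A_\mu)$ vanish modulo $V_{i+1}$, so $A_\mu$ lies in the socle and does \emph{not} generate the quotient; the bimodule generator becomes $D_\mu$, and the left module structure of the subquotient is no longer a direct sum of simples. Your proposal to ``read the module structure off the remaining non-zero relations alone'' glosses over the real work: one must exhibit $KA_\mu+KB_\mu$ as the costandard submodule $H^0_G(\mu)$ (with $L_G(\mu)$ in its socle), identify the quotient by this submodule with $H^0_G(\mu+\pi)$, and recognize the whole $4$-dimensional left module as the projective-injective cover $I_G(\mu)$ with its Loewy diamond $L_G(\mu)$ on top, $L_G(\mu-\pi)\oplus L_G(\mu+\pi)$ in the middle, $L_G(\mu)$ on the bottom. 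This analysis, not a sign-and-scalar bookkeeping variant of the typical case, is the content of the second half of the paper's proof, and until it is supplied the comparison step in your (II) does not close.
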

\begin{proof}
We will apply formulae from Lemma \ref{l2}.

If $p$ does not divide $r$, then  $_{\mu}W=KA_{\mu}+KB_{\mu}$ and 
$_{\mu-\pi}V=KC_{\mu}+K(A_{\mu-\pi}-rD_{\mu})$ are left simple $G$-supermodules $L_G(\mu)$ and $L_G(\mu-\pi)$ of highest weight $\mu$ and $\mu-\pi$, respectively. The $K$-spaces $W_{\mu}=KC_{\mu}+KA_{\mu}$ and 
$V_{\mu-\pi}=KB_{\mu}+K(A_{\mu-\pi}-rD_{\mu})$ are right simple $G$-supermodules $L_G(\mu)^r$ and $L_G(\mu-\pi)^r$ of highest weight $\mu$ and $\mu-\pi$, respectively. Additionally, $KA_{\mu}+KB_{\mu}+KC_{\mu}+K(A_{\mu-\pi}-rD_{\mu})$ 
is a $G$-superbimodule, generated by $A_{\mu}$, that is isomorphic to $L_G(\mu)\oplus L_G(\mu-\pi)$ as left $G$-supermodule,  isomorphic 
to $L_G(\mu)^r\oplus L_G(\mu-\pi)^r$ as right $G$-supermodule, and isomorphic to $ H^0_G(\mu)\otimes V_G(\mu)^*=
L_G(\mu)\otimes L_G(\mu)$ as a $G\times G$-supermodule.

It follows that the space $V_i$ is a $G\times G$-supermodule for every $0\leq i$, and that 
$V_i/V_{i+1}\simeq H^0_G(\lambda-i\pi)\otimes V_G(\lambda-i\pi)^*$ as $G\times G$-superbimodules.

If $p$ divides $r$, then the left projective-injective supermodule $I_G(\mu)$
has the socle $L_G(\mu)$ generated by $A_{\mu}=A_{\mu}-rD_{\mu+\pi}$ and the top $L_G(\mu)$ generated by $D_{\mu+\pi}$. In this case, $_{\mu}W$ is the costandard supersubmodule $H^0_G(\mu)$ of $I_G(\mu)$, the 
factorsupermodule $I_G(\mu)/_{\mu}W$, represented by linear combinations of classes of elements $\overline{D_{\mu+\pi}}$ and $\overline{C_{\mu+\pi}}$, is isomorphic to the costandard supermodule  $H^0_G(\mu+\pi)$, and $I_G(\mu)$ has the 
left regular representation
\[\begin{array}{ccccccc}&&&L_G(\mu)&&&\\
&&\diagup&&\diagdown&\\
&L_G(\mu-\pi)&&&&&L_G(\mu+\pi)\\
&&\diagdown&&\diagup&\\
&&&L_G(\mu)&&&
\end{array}.\]

The space $V_i$ is a $G$-superbimodule for every $0\leq i$. The quotient $V_i/V_{i+1}$, generated by $D_{\mu-i\pi}$ as a $G$-superbimodule, is a $K$-span of 
elements $A_{\mu-i\pi}$, $B_{\mu-i\pi}$ $C_{\mu-i\pi}$ and $D_{\mu-i\pi}$. It is isomorphic to 
$H^0_G(\lambda-i\pi)\otimes V_G(\lambda-i\pi)^*$ as a $G\times G$-supermodule.
\end{proof}

\section{A basis of $M_{\Gamma}$ for $\GL(m|n)$}

In this section, we use the combinatorics of tableaux and bideterminants to explicitly construct a basis of bimodules $M_{\Gamma}$ for a finitely generated ideal $\Gamma$ of $X(T)^+$. 

First, we consider that case when $G=\GL(m)$.

\subsection{$\GL(m)$-modules based on bideterminants}

We start by recalling a few classical facts about tableaux, bideterminants, and Schur algebras. The reader is asked to consult \cite{martin} for more details and explanations.
Denote by $A(m)$ the bialgebra that is a polynomial algebra freely generated in commuting variables $c_{ij}$ for $1\leq i,j\leq m$, with the comultiplication $\Delta(c_{ij})=\sum_{k=1}^m c_{ik}\otimes c_{kj}$ and the counit
$\epsilon(c_{ij})=\delta_{ij}$. The dual $A(m)^*$ is an algebra, and its degree $r\geq 0$ component 
$A(m,r)^*=S(m,r)$ is the Schur algebra of degree $r$.

We will denote partitions of $\tilde{r}$ consisting of no more than $m$ parts by symbols $\tilde{\lambda}$, to distinguish it from dominant weights $\lambda$ of the group $GL(m)$. Then  
 $\tilde{r}=|\tilde{\lambda}|=\sum_{i=1}^m \tilde{\lambda}_i$ is the length of the partition $\tilde{\lambda}$.

Partitions $\tilde{\lambda}$ of $\tilde{r}$ are ordered by reverse lexicographic order on their conjugates. This means $\tilde{\lambda}\geq \tilde{\mu}$ if and only if $\tilde{\lambda}'\leq_{lex} \tilde{\mu}'$, where $'$ denotes the conjugate partition.

\begin{lm}\label{l12.1}
The order $\leq$ refines the dominance order $\unlhd$. That is,  
$\tilde{\lambda}\unlhd \tilde{\mu}$ implies $\tilde{\lambda}\leq \tilde{\mu}$.
\end{lm}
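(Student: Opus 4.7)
The plan is to use two standard facts about partitions and then combine them, paying careful attention to the fact that the order $\leq$ is defined as \emph{reverse} lex on \emph{conjugate} partitions, so inequalities will flip twice along the way.

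First I would recall that dominance is reversed by conjugation, namely $\tilde{\lambda}\unlhd\tilde{\mu}$ if and only if $\tilde{\mu}'\unlhd\tilde{\lambda}'$. This is the classical statement in, e.g., Macdonald's book, proved by observing that $\tilde{\lambda}'_j=\#\{i:\tilde{\lambda}_i\geq j\}$, so
\[\sum_{j=1}^{k}\tilde{\lambda}'_j=\sum_{i\geq 1}\min(\tilde{\lambda}_i,k),\]
and comparing this expression for $\tilde{\lambda}$ and $\tilde{\mu}$ turns the partial-sum inequalities for the original partitions into partial-sum inequalities for the conjugates, in the reverse direction.

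Next I would recall that dominance refines lexicographic order (in the same direction): if $\tilde{\alpha}\unlhd\tilde{\beta}$ and $\tilde{\alpha}\neq\tilde{\beta}$, take the smallest index $k$ at which they differ. Then $\sum_{i<k}\tilde{\alpha}_i=\sum_{i<k}\tilde{\beta}_i$, while $\sum_{i\leq k}\tilde{\alpha}_i\leq\sum_{i\leq k}\tilde{\beta}_i$, forcing $\tilde{\alpha}_k<\tilde{\beta}_k$ and hence $\tilde{\alpha}<_{lex}\tilde{\beta}$. This is a short, one-paragraph verification.

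Combining: assume $\tilde{\lambda}\unlhd\tilde{\mu}$. By the first step, $\tilde{\mu}'\unlhd\tilde{\lambda}'$; by the second step, $\tilde{\mu}'\leq_{lex}\tilde{\lambda}'$. By the definition given immediately before the lemma, this last inequality is exactly $\tilde{\lambda}\leq\tilde{\mu}$, finishing the proof. There is no serious obstacle here; the only point requiring care is bookkeeping the direction of the inequalities, since the passage from $\unlhd$ to the lex order on conjugates reverses the comparison once (through conjugation) and then a second time (because $\leq$ on partitions is defined via \emph{reverse} lex), so the two flips cancel and give the claimed implication.
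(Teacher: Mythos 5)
Your proof is correct and follows essentially the same route as the paper: conjugation reverses dominance, dominance implies lexicographic order on the conjugates, and by definition the latter is $\tilde{\lambda}\leq\tilde{\mu}$. The only difference is that you supply short proofs of the two standard facts, whereas the paper cites Clausen for one of them; this is a matter of exposition, not substance.
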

\begin{proof}
It is well known (see p. 210 of \cite{cl2}) that the lexicographic order refines the dominance order. 
Also, for partitions, $\tilde{\lambda}\unlhd \tilde{\mu}$ if and only if $\tilde{\mu}'\unlhd \tilde{\lambda}'$.
Therefore, $\tilde{\lambda}\unlhd \tilde{\mu}$ implies $\tilde{\mu}'\unlhd \tilde{\lambda}'$, which yields $\tilde{\mu}'\leq_{lex} \tilde{\lambda}'$ and 
$\tilde{\lambda}\leq \tilde{\mu}$.
\end{proof}

Let us list all partitions $\tilde{\lambda}$ of $\tilde{r}$ with respect to the order $\leq$ as
\begin{equation}\label{1}\tilde{\lambda}_1=([\frac{\tilde{r}}{m}]+1, \ldots, [\frac{\tilde{r}}{m}]+1,[\frac{\tilde{r}}{m}], \ldots,  [\frac{\tilde{r}}{m}])<\ldots <\tilde{\lambda}_{t_{\tilde{r}-1}} <\tilde{\lambda}_{t_{\tilde{r}}}=(\tilde{r}).\end{equation}

The following algorithm describes how to write all partitions on the list $(\ref{1})$:
We represent a partition $\tilde{\lambda}$ by a diagram $[\tilde{\lambda}]$ of the shape $\tilde{\lambda}$. The diagram of $[\tilde{\lambda}_k]$ is obtained from $[\tilde{\lambda}_{k-1}]$ by moving the rightmost box in the last row to the end of the closest row above it so that the resulting diagram is a diagram of a partition. 

\begin{ex} Take $m=3$ and $\tilde{r}=6$.
The listing $(\ref{1})$ and the corresponding diagrams are depicted below.

\[\begin{aligned}
&(2,2,2)&< &(3,2,1)&< &(3,3,0)&< &(4,2,0)&< &(5,1,0)&< &(6,0,0)&\\
&
\begin{tabular}{|l|l|}
\hline
 &  \\ \hline
 &  \\ \hline
 &  \\ \hline
\end{tabular}
&&
\begin{tabular}{|l|ll}
\hline
 & \multicolumn{1}{l|}{} & \multicolumn{1}{l|}{} \\ \hline
 & \multicolumn{1}{l|}{} &                       \\ \cline{1-2}
 &                       &                       \\ \cline{1-1}
\end{tabular}
&&
\begin{tabular}{lll}
\hline
\multicolumn{1}{|l|}{} & \multicolumn{1}{l|}{} & \multicolumn{1}{l|}{} \\ \hline
\multicolumn{1}{|l|}{} & \multicolumn{1}{l|}{} & \multicolumn{1}{l|}{} \\ \hline
                       &                       &                      
\end{tabular}
&&
\begin{tabular}{llll}
\hline
\multicolumn{1}{|l|}{} & \multicolumn{1}{l|}{} & \multicolumn{1}{l|}{} & \multicolumn{1}{l|}{} \\ \hline
\multicolumn{1}{|l|}{} & \multicolumn{1}{l|}{} &                       &                       \\ \cline{1-2}
                       &                       &                       &                      
\end{tabular}
&&
\begin{tabular}{lllll}
\hline
\multicolumn{1}{|l|}{} & \multicolumn{1}{l|}{} & \multicolumn{1}{l|}{} & \multicolumn{1}{l|}{} & \multicolumn{1}{l|}{} \\ \hline
\multicolumn{1}{|l|}{} &                       &                       &                       &                       \\ \cline{1-1}
                       &                       &                       &                       &                      
\end{tabular}
&&
\begin{tabular}{llllll}
\hline
\multicolumn{1}{|l|}{} & \multicolumn{1}{l|}{} & \multicolumn{1}{l|}{} & \multicolumn{1}{l|}{} & \multicolumn{1}{l|}{} & \multicolumn{1}{l|}{} \\ \hline
                       &                       &                       &                       &                       &                       \\
                       &                       &                       &                       &                       &                      
\end{tabular}
&
\end{aligned}\]
\end{ex}

Denote by $T^{\tilde{\lambda}}$ a fixed basic tableau of shape $\tilde{\lambda}$ and by $T^{\tilde{\lambda}}_{\ell_{\tilde{\lambda}}}$ the canonical tableau of shape $\tilde{\lambda}$.
Denote by $A(\leq \tilde{\lambda})$ the $K$-span of bideterminants $T^{\tilde{\zeta}}(i:j)$ for $\tilde{\zeta}\leq \tilde{\lambda}$, where $i, j$ are multi-indices of length $\tilde{r}$ with components from the set $\{1, \ldots, \tilde{r}\}$.

A classical statement (see \cite{martin}) asserts that $A(m,\tilde{r})$ has a filtration by $S(m,\tilde{r})$-bimodules 
$A(\leq \tilde{\lambda}_k)$, where $\tilde{\lambda}_k$ is listed in (\ref{1}), such that the quotient 
$A(\leq \tilde{\lambda}_k)/A(\leq \tilde{\lambda}_{k-1})$ has a $K$-basis consisting of bideterminants $T^{\tilde{\lambda}_k}(i:j)$, where
$T^{\tilde{\lambda}_k}_i, T^{\tilde{\lambda}_k}_j$ are standard tableaux.

The $K$-span of bideterminants $T^{\tilde{\lambda}_k}(i:\ell_{\tilde{\lambda}_k})$ is isomorphic to the left costandard $S(m,\tilde{r})$-module $\nabla_{S(m,\tilde{r})}(\tilde{\lambda}_k)$, 
the $K$-span of bideterminants $T^{\tilde{\lambda}_k}(\ell_{\tilde{\lambda}_k}:j)$ is isomorphic to the right $S(m,\tilde{r})$-module $\Delta_{S(m,r)}(\tilde{\lambda}_k)^*$ that is dual to the right standard module 
$\Delta_{S(m,\tilde{r})}(\tilde{\lambda}_k)$,
and the quotient $A(\leq \tilde{\lambda}_k)/A(\leq \tilde{\lambda}_{k-1})$, represented by the $K$-span of bideterminants $T^{\tilde{\lambda}_k}(i:j)$, is isomorphic to the $S(m,\tilde{r})$-bimodule $\nabla_{S(m,\tilde{r})}(\tilde{\lambda}_k)\otimes \Delta_{S(m,\tilde{r})}(\tilde{\lambda}_k)^*$
for each $k=1, \ldots, t_{\tilde{r}}$.

Now we turn our attention to $\GL(m)$-modules.
A dominant $\GL(m)$-weight $\lambda=(\lambda_1,\ldots, \lambda_m)\in X(T)^+$ is polynomial if $\lambda_m\geq 0$.
Denote $\nu=(1, \ldots, 1)$. 
To each $\lambda=(\lambda_1, \ldots, \lambda_m)\in X(T)^+$ we assign the corresponding partition 
$\tilde{\lambda}= \lambda-\lambda_m\nu= (\lambda_1-\lambda_m, \ldots, \lambda_m-\lambda_m=0)$.
If $|\lambda|=\sum_{i=1}^m \lambda_i=r$, then $|\tilde{\lambda}|=r-m\lambda_m=\tilde{r}$.
Partition $\tilde{\lambda}$ corresponds to a weight of $S(m,\tilde{r})$, which is a polynomial weight of $GL(m)$.

Modules over Schur algebras $S(m,r)$ correspond to polynomial $\GL(m)$-modules. 
If $\tilde{\lambda}$ is polynomial and $|\tilde{\lambda}|=\tilde{r}$, then the induced $G$-module $H^0_G(\tilde{\lambda})$ of the highest weight $\tilde{\lambda}$ is isomorphic to $\nabla_{S(m,\tilde{r})}(\tilde{\lambda})$.

The one-dimensional $\GL(m)$-module $Det$ of the weight $\nu$ plays a crucial role in the representation theory of $\GL(m)$.
If $\lambda\in X(T)^+$, then there is 
\[H^0_{\GL(m)}(\lambda)\simeq \nabla_{S(m,\tilde{r})}(\tilde{\lambda})\otimes_K Det^{\lambda_m},\]
and we say that 
$H^0_{\GL(m)}(\lambda)$ is obtained from $\nabla_{S(m,\tilde{r})}(\tilde{\lambda})$ using a shift by $Det^{\lambda_m}$.

Analogously,  
\[(V_{\GL(m)}(\lambda)^*)^r\simeq  \Delta_{S(m,\tilde{r})}(\tilde{\lambda})^*\otimes_K Det^{\lambda_m}.\]

Thus, if $\lambda$ is polynomial, then the $\GL(m)$-bimodule $H^0_{\GL(m)}(\lambda)\otimes (V_{\GL(m)}(\lambda)^*)^r$ is isomorphic to $\nabla_{S(m,r)}(\lambda)\otimes \Delta_{S(m,r)}(\lambda)^*$. 
If $\lambda$ is not polynomial, then 
\[H^0_{\GL(m)}(\lambda)\otimes (V_{\GL(m)}(\lambda)^*)^r\simeq 
(\nabla_{S(m,\tilde{r})}(\tilde{\lambda})\otimes Det^{\lambda_m})\otimes (\Delta_{S(m,\tilde{r})}(\tilde{\lambda})^*\otimes Det^{\lambda_m}).\]

\subsection{Building a specific Donkin-Koppinen order}
\label{ss33}

Theorem 2 of \cite{kop} states that Donkin-Koppinen filtrations are in one-to-one correspondence with linear orders of 
all dominant weights \{$\lambda_1, \ldots, \lambda_n, \ldots \}$ of $G$ that satisfies the following property:
\begin{equation}\label{2}
\begin{aligned}
&\text{If an irreducible } G-\text{module } L_G(\lambda) \text{ of the highest weight } \lambda \text{ appears as a} \\
&\text{composition factor of } H^0_G(\mu),  \text{ then } \lambda  \text{ appears in the above list before }  \mu.
\end{aligned}
\end{equation}

If a linear order of dominant weights satisfies $(\ref{2})$, then it is called a Donkin-Koppinen order. We will now construct a specific Donkin-Koppinen order.

Since all weights in the induced module $H^0_G(\lambda)$ have the same length $|\lambda|=r$, 
the order of weights of different lengths does not influence whether the property $(\ref{2})$ is satisfied. Therefore, it suffices to describe the order of dominant weights $\lambda$ of fixed length $r$. 

If $\lambda\in X(T)^+$, then the corresponding partition $\tilde{\lambda}$ appears in the listing $(\ref{1})$ of weights of length $\tilde{r}$.
To build a listing of dominant weights of length $r$, we proceed by sets corresponding to $\tilde{r}\geq 0$ such that $r\equiv \tilde{r} \pmod m$. We start with the smallest such $\tilde{r}\geq 0$ and proceed to larger $\tilde{r}$, in each step adding $m$. For each $\tilde{r}$, we order all partitions $\tilde{\lambda}$ of length $\tilde{r}$ such that $\tilde{\lambda}_m=0$ as in $(\ref{1})$, and then shift them to weights $\lambda$ by subtracting $\frac{\tilde{r}-r}{m}\nu$. We list these weights $\lambda$ in the corresponding  order by increasing value of $\tilde{r}$, and obtain an infinite order of all dominant weights of length $r$.
Note that a weight $\lambda$ corresponding to a shift of $\tilde{\lambda}$ such that 
$\tilde{\lambda}_m>0$ appears in the previous part of the list corresponding to the value $\tilde{r}-\tilde{\lambda}_m m$.

This listing of dominant weights of length $r$ behaves like "a nested Russian doll chain," in which the list of the weights corresponding to values $0\leq \tilde{r}\leq \tilde{r}_0$ appears at the beginning of the list of the weights corresponding to values $0\leq \tilde{r}\leq \tilde{r}_1$ for $\tilde{r}_0<\tilde{r}_1$  as seen on the following examples.

\begin{ex}
Assume $m=3$ and $r=0$.
Consider the listing of weights $\tilde{\lambda}$ 

$(0,0,0)$ for $\tilde{r}=0$;

$(1,1,1)<(2,1,0)<(3,0,0)$ for $\tilde{r}=3$;

$(2,2,2)<(3,2,1)<(3,3,0)<(4,2,0)<(5,1,0)<(6,0,0)$ for $\tilde{r}=6$;

$(3,3,3)<(4,3,2)<(4,4,1)<(5,4,0)<(6,3,0)<(7,2,0)<(8,1,0)<(9,0,0)$ for $\tilde{r}=9$.

Corresponding to these, we obtain the beginning of the order of dominant weights $\lambda$ of length $r=0$
as
\[\begin{aligned} &(0,0,0);\\
&<(1,0,-1)<(2,-1,-1);\\
&<(1,1,-2)<(2,0,-2)<(3,-1,-2)<(4,-2,-2);\\
&<(2,1,-3)<(3,0,-3)<(4,-1,-3)<(5,-2,-3)<(6,-3,-3).
\end{aligned}\]
For $m=3$ and $r=-4$, considering the listings for $\tilde{r}=2,5,8$, we get the listing of $\lambda$ as
\[\begin{aligned} &(-1,-1,-2)<(0,-2,-2);\\
&<(0,-1,-3)<(1,-2,-3)<(2,-3,-3);\\
&<(0,0,-4)<(1,-1,-4)<(2,-2,-4)<(3,-3,-4)<(4,-4,-4). \end{aligned}\]
For $m=3$ and $r=4$, considering the listings for $\tilde{r}=4, 7$, we get the listing of $\lambda$ as
\[\begin{aligned}&(2,1,1)<(2,2,0)<(3,1,0)<(4,0,0);\\
&<(3,2,-1)<(4,1,-1)<(5,0,-1)<(6,-1,-1).\end{aligned}\]
\end{ex}

The specific Donkin-Koppinen order of dominant $\GL(m)$-weights constructed above will be denoted by $\leq$.
It follows from Lemma \ref{l12.1} that $\leq$ refines the dominance order $\unlhd$ of dominant $\GL(m)$-weights.

\subsection{Donkin-Koppinen filtration for $\GL(m)$}

Let $\lambda$ be a dominant $\GL(m)$-weight and $\tilde{\lambda}=\lambda-\lambda_m\nu$ be the corresponding polynomial weight as above.
Fix a basic tableau $T^{\tilde{\lambda}}$ of the shape $\tilde{\lambda}$ and the length $\tilde{r}=|\tilde{\lambda}|$. Let $i,j$ be multi-indices of length $\tilde{r}$ with entries from the set $\{1, \ldots, m\}$ and $T^{\tilde{\lambda}}(i:j)$ be the bideterminant built on $i,j$.

Denote by $M_{\leq \tilde{\lambda}}$ the $K$-span of all bideterminants of shapes $\tilde{\mu}\leq \tilde{\lambda}$, and by 
$M_{\unlhd \tilde{\lambda}}$ the $K$-span of all bideterminants of shapes $\tilde{\mu}\unlhd \tilde{\lambda}$. 
Also, denote by $M_{< \tilde{\lambda}}$ the $K$-span of all bideterminants of shapes $\tilde{\mu}< \tilde{\lambda}$, and by 
$M_{\lhd \tilde{\lambda}}$ the $K$-span of all bideterminants of shapes $\tilde{\mu}\lhd \tilde{\lambda}$. 
It is a well-known result (see \cite{martin}) that $M_{\unlhd \tilde{\lambda}}$ is a $\GL(m)$-bimodule that has a $K$-basis consisting of 
bideterminants $T^{\tilde{\zeta}}(i:j)$, where $\tilde{\zeta}\unlhd \tilde{\lambda}$ and $T^{\tilde{\zeta}}_i, T^{\tilde{\zeta}}_j$ are standard tableaux.
Also, $M_{\lhd \tilde{\lambda}}$ is a $\GL(m)$-bimodule that has a $K$-basis consisting of 
bideterminants $T^{\tilde{\zeta}}(i:j)$, where $\tilde{\zeta}\lhd \tilde{\lambda}$ and $T^{\tilde{\zeta}}_i, T^{\tilde{\zeta}}_j$ are standard tableaux.
Additionally, $M_{\unlhd \tilde{\lambda}}/M_{\lhd \tilde{\lambda}} \simeq H^0_{\GL(m)}(\lambda)\otimes (V_{\GL(m)}(\lambda)^*)^r$ 
as $\GL(m)$-bimodules.

Shifting by a power of $Det$, we define $M_{\leq \lambda}= Det^{\lambda_m}\otimes M_{\leq \tilde{\lambda}}$ and 
$M_{\unlhd \lambda}= Det^{\lambda_m}\otimes M_{\unlhd \tilde{\lambda}}$. Then 
$M_{\unlhd \lambda}$ is a $\GL(m)$-bimodule that has a $K$-basis consisting of 
$Det^{\lambda_m}\otimes T^{\tilde{\zeta}}(i:j)$, where $\tilde{\zeta}\unlhd \tilde{\lambda}$ and $T^{\tilde{\zeta}}_i, T^{\tilde{\zeta}}_j$ are standard tableaux. 
The expressions $Det^{\lambda_m}\otimes T^{\tilde{\zeta}}(i:j)$ are called \emph{generalized bideterminants}.
Also, 
$M_{\lhd \lambda}$ is a $\GL(m)$-bimodule that has a basis consisting of 
$Det^{\lambda_m}\otimes T^{\tilde{\zeta}}(i:j)$, where $\tilde{\zeta}\lhd \tilde{\lambda}$, and $T^{\tilde{\zeta}}_i, T^{\tilde{\zeta}}_j$ are standard tableaux. 
Finally, 
\[M_{\unlhd \lambda}/M_{\lhd \lambda} \simeq H^0_{\GL(m)}(\lambda)\otimes (V_{\GL(m)}(\lambda)^*)^r\]
as $\GL(m)$-bimodules.

If $\Gamma$ is a finitely generated ideal of $X(T)^+$, then the $\GL(m)$-bimodules $M_{\leq \lambda}$ for $\lambda\in \Gamma$ form a Donkin-Koppinen filtration of $O_{\Gamma}(K[\GL(m)])$ since 
\[M_{\leq \lambda}/M_{< \lambda} \simeq H^0_{\GL(m)}(\lambda)\otimes (V_{\GL(m)}(\lambda)^*)^r\]
as $\GL(m)$-bimodules.

\subsection{A basis of $M_{\Gamma}$}

If $G=\GL(m|n)$ is a general linear supergroup, then its even subgroup $G_{ev}$ is isomorphic to $\GL(m)\times \GL(n)$.
We write a dominant weight $\lambda$ of $G$ as $\lambda=(\lambda^+|\lambda^-)$, where 
$\lambda^+$ is a dominant weight of $\GL(m)$ of length $r^+=|\lambda^+|$, and $\lambda^-$ is a dominant weight of $\GL(n)$ of length $r^-=|\lambda^-|$.
We define the strong dominance order $\unlhd_s$ on dominant weights of $\GL(m|n)$ as
$\mu\unlhd_s \lambda$ if and only if $\mu^+\unlhd \lambda^+$ and $\mu^-\unlhd \lambda^-$.

If a simple $G_{ev}$-module of highest weight $\mu$ is a composition factor of $H^0_{G_{ev}}(\lambda)$, 
then $\mu\unlhd_s \lambda$. We can combine the previously defined orders $\leq$ on the weights $\lambda^+$ of length $r^+$, and $\lambda^-$ of length $r^-$ of  $\GL(m)$,  and $\GL(n)$ respectively, 
to create a lexicographic order $\leq_{lex}$ on weights of $G$ by
$\mu\leq_{lex} \lambda$ if and only if $\mu^+<\lambda^+$, or $\mu^+=\lambda^+$ and $\mu^-\leq \lambda^-$.
Then $\leq_{lex}$ is a Donkin-Koppinen order for $G_{ev}$.

We can use the specific listings of dominant weights $\{\lambda^+_1, \ldots, \lambda^+_i, \ldots \}$ of $\GL(m)$ of length $r^+$, and 
$\{\lambda^-_1, \ldots, \lambda^-_i, \ldots, \}$ of $\GL(n)$ of length $r^-$ constructed earlier, and combine them by induction on $i$ to get a listing of dominant weights $\lambda=(\lambda^+|\lambda^-) $ 
of $\GL(m|n)$ of length $(r^+|r^-)$ as follows.
For $i=1$, we list the minimal weight $(\lambda^+_1| \lambda^-_1)$. For the inductive step, we assume that all weights 
$(\lambda^+_j|\lambda^-_k)$, where $1\leq j,k\leq i$, were listed, and add the remaining weights of the type
$(\lambda^+_j|\lambda^-_k)$, where $1\leq j,j\leq i+1$ in the order 
\[(\lambda^+_{i+1}|\lambda^-_1), \ldots (\lambda^+_{i+1}|\lambda^-_i), (\lambda^+_1|\lambda^-_{i+1}), 
\ldots, (\lambda^+_i|\lambda^-_{i+1}), (\lambda^+_{i+1}|\lambda^-_{i+1}).\]
Denote the above listing of dominant weights $\lambda$ of $\GL(m|n)$ of fixed length $(r^+|r^-)$ 
as $\{\mu^{r^+|r^-}_1, \ldots, \mu^{r^+|r^-}_i, \ldots \}$ and the corresponding order by $\leq^{r^+|r^-}$.

Since there are no extensions between simple $G_{ev}$-modules of different lengths $(r^+|r^-)$, we can build 
Donkin-Koppinen order for $G_{ev}$ in many ways. 
One possibility is to choose an arbitrary listing of the ordered pairs $(r^+|r^-)\in \mathbb{Z}^2$, and then form the listing of all weights $\lambda$ by splicing the listings corresponding to $\leq^{r^+|r^-}$ in the chosen order of ordered pairs $(r^+|r^-)$.

We can construct yet another Donkin-Koppinen order $\leq_{ev}$ of dominant weights $\lambda$ of $G_{ev}$ as follows. 
First, we list the minimal weights $\mu^{r^+|r^-}_1$ for all ordered pairs $(r^+|r^-)$ in an arbitrary order.
Then we list the weights $\mu^{r^+|r^-}_2$ for all ordered pairs $(r^+|r^-)$ in an arbitrary order, and so on, 
each time listing weights $\mu^{r^+|r^-}_i$ for a fixed index $i$ and all ordered pairs $(r^+|r^-)$ in an arbitrary order. 

Any of the orders $\leq$, constructed above, refines the order $\unlhd_s$.

The $K$-space $M_{\leq\lambda}= \sum_{\mu\leq \lambda} M_{\leq \mu^+}M_{\leq \mu^-}$ is a $G_{ev}$-bimodule 
and it has a $K$-basis given by vectors $v=v^+v^-$, where 
\[v^+=Det_1^{\lambda^+_m} T^{\tilde{\zeta}^+}(i^+:j^+)\text{ and } v^-=Det_2^{\lambda^-_n}T^{\tilde{\zeta}^-}(i^-:j^-)\]
are generalized bideterminants such that $(\tilde{\zeta}^+|\tilde{\zeta}^-)\leq \tilde{\lambda}$,  and tableaux $T^{\tilde{\zeta}^+}_{i^+}, T^{\tilde{\zeta}^+}_{j^+}$ and $T^{\tilde{\zeta}^-}_{i^-}, T^{\tilde{\zeta}^-}_{j^-}$ are standard.
If $\mu$ is a predecessor of $\lambda$ under $\leq$, then we define $M_{<\lambda}=M_{\mu}$.

The advantage of using the order $\leq_{ev}$  is that each bimodule $M_{\leq_{ev} \lambda}$ is a direct sum of finite-dimensional bimodules $M_{\leq_{ev} \lambda}^{r^+|r^-}$ consisting of all elements in $M_{\leq_{ev} \lambda}$ of the length $(|\lambda^+|=r^+|r^-=|\lambda^-|)$. 

If $\Gamma$ is a finitely generated ideal of $X(T)^+$, then we can describe the Donkin-Koppinen filtration of
$M_{\Gamma}$ as follows.

\begin{pr}
Let $\Gamma$ be a finitely generated ideal of $X(T)^+$. Then the set of $G_{ev}$-bimodules $M_{\leq \lambda}$, 
listed by the restriction of the Donkin-Koppinen order $\leq$ on $\Gamma$, form a Donkin-Koppinen filtration of 
$M_{\Gamma}=O_{\Gamma}(K[G])$.
\end{pr}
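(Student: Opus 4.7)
The plan is to verify the three defining features of a Donkin--Koppinen filtration in turn: (i) each $M_{\leq\lambda}$ is a $G_{ev}$-subbimodule sitting inside $M_\Gamma$; (ii) the chain $\{M_{\leq\lambda}\}_{\lambda\in\Gamma}$ exhausts $M_\Gamma$ as $\lambda$ traverses $\Gamma$ in the order $\leq$; and (iii) each successive quotient $M_{\leq\lambda}/M_{<\lambda}$ is isomorphic, as a $G_{ev}$-bimodule, to $H^0_{G_{ev}}(\lambda)\otimes (V_{G_{ev}}(\lambda)^*)^r$. The closing remark of Subsection~3.4 already observed that the order $\leq$ on $G_{ev}$-weights refines the strong dominance order $\unlhd_s$, and since $\unlhd_s$ controls the composition factors of induced $G_{ev}$-modules, the Donkin--Koppinen order property (\ref{2}) holds for $\leq$ automatically.

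Points (i) and (iii) reduce to the $\GL(m)$- and $\GL(n)$-results of Subsection~3.3 via the decomposition $G_{ev}=\GL(m)\times\GL(n)$ and the correspondingly factored spanning set $v=v^+v^-$ for $M_{\leq\lambda}$. Subsection~3.3 shows each $M_{\leq\mu^\pm}$ is a $\GL(m)$- (resp.\ $\GL(n)$-) bimodule with $M_{\leq\mu^\pm}/M_{<\mu^\pm}\simeq H^0_{\GL(m)}(\mu^\pm)\otimes (V_{\GL(m)}(\mu^\pm)^*)^r$. Taking the outer tensor product over the two factors yields the $G_{ev}$-bimodule structure on each $M_{\leq\lambda}$, and a K\"unneth-type identification of graded pieces produces the asserted isomorphism for the factor $M_{\leq\lambda}/M_{<\lambda}$. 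Containment of $M_{\leq\lambda}$ in $M_\Gamma$ follows because its composition factors have highest weights $\mu\leq\lambda\in\Gamma$ and $\Gamma$ is $\leq$-closed, the latter being a consequence of $\leq$ refining $\unlhd_s$ and $\Gamma$ being an ideal.

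Point (ii) is the crux. One must reconcile the abstract definition of $M_\Gamma$ as the \emph{largest} submodule whose composition factors lie in $\Gamma$ with the combinatorial object $\bigcup_{\lambda\in\Gamma}M_{\leq\lambda}$. My strategy is first to invoke the classical bideterminant theorem (cf.~\cite{martin}) together with the $G_{ev}=\GL(m)\times\GL(n)$ factorization to express $K[G_{ev}]$ globally as $\bigcup_{\lambda\in X(T)^+}M_{\leq\lambda}$, yielding a complete $G_{ev}$-bimodule filtration whose graded pieces are exactly $H^0_{G_{ev}}(\lambda)\otimes (V_{G_{ev}}(\lambda)^*)^r$. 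Truncating this global filtration at $\Gamma$ gives the $G_{ev}$-bimodule $N_\Gamma=\bigcup_{\lambda\in\Gamma}M_{\leq\lambda}\subseteq M_\Gamma$, where the containment in $M_\Gamma$ uses that all composition factors of $N_\Gamma$ lie in $\Gamma$. The reverse inclusion $M_\Gamma\subseteq N_\Gamma$ is the main obstacle: one must check that no composition factor of $M_\Gamma$ escapes the shape-filtration by bideterminants. Here the Donkin--Koppinen property of $\leq$ is essential---since $\leq$ refines $\unlhd_s$, the slot in the global filtration assigned to weight $\mu$ is exactly where a $\mu$-type composition factor can reside, so every such composition factor of $M_\Gamma$ (with $\mu\in\Gamma$) must already sit inside $M_{\leq\mu}\subseteq N_\Gamma$. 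This closes the argument and yields the claimed Donkin--Koppinen filtration of $M_\Gamma$.
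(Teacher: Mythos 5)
Your proof takes essentially the same route as the paper: the heart of both arguments is to identify $M_{\leq\lambda}/M_{<\lambda}$ with the outer tensor product of the $\GL(m)$- and $\GL(n)$-quotients from Subsection~3.3 via the factorization $G_{ev}\simeq\GL(m)\times\GL(n)$, which the paper packages by introducing the auxiliary spans $M_{\unlhd_s\lambda}=M_{\unlhd\lambda^+}M_{\unlhd\lambda^-}$ and $M_{\lhd_s\lambda}$. You additionally spell out the exhaustion step $\bigcup_{\lambda\in\Gamma}M_{\leq\lambda}=M_\Gamma$, which the paper takes as automatic from the bideterminant theory; your phrasing there is slightly loose (a composition factor of $M_\Gamma$ of type $\mu$ is a subquotient, so it does not literally ``sit inside'' $M_{\leq\mu}$), but the standard socle argument on a minimal $M_{\leq\lambda}$ containing a given vector of $M_\Gamma$ closes that step, so this is a cosmetic imprecision rather than a gap.
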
 
\begin{proof}
Denote $M_{\unlhd_s \lambda}= M_{\unlhd \lambda^+}M_{\unlhd \lambda^-}$
and $M_{\lhd_s \lambda}= M_{\unlhd \lambda^+} M_{\lhd \lambda^-}
+M_{\lhd \lambda^+}M_{\unlhd \lambda^-}$.

The isomorphisms
\[M_{\leq \lambda}/M_{<\lambda}\simeq H^0_{G_{ev}}(\lambda)\otimes (V_{G_{ev}}(\lambda)^*)^r\simeq M_{\unlhd_s \lambda}/M_{\lhd_s \lambda}\]
of $G_{ev}$-bimodules proves the claim.
\end{proof}

\section{Donkin-Koppinen filtration for $\GL(m|n)$}

\subsection{The action of superderivations on generators}

Corresponding to the block decomposition
\[\begin{pmatrix}C_{11}&C_{12}\\C_{21}&C_{22}\end{pmatrix}\]
of the generic $(m+n)\times(m+n)$ matrix $C$, we break the ordered pairs $(i,j)$ of indices from the set $\{1, \ldots, m+n\}$ into four blocks. 
We write 

$(i,j)\in I_{11}$ if and only of $1\leq i,j\leq m$;

$(i,j)\in I_{12}$ if and only if $1\leq i\leq m$ and $m+1\leq j\leq m+n$;

$(i,j)\in I_{21}$ if and only if $m+1\leq i\leq m+n$ and $1\leq j\leq m$;

$(i,j)\in I_{22}$ if and only if $m+1\leq i,j\leq m+n$.

We have the following formulae:

If $(i,j)\in I_{11}$, then $y_{ij}=c_{ij}$.

If $(i,j)\in I_{12}$, then \[y_{ij}=\frac{A_{i1}c_{1j}+\ldots +A_{im}c_{mj}}{D}.\]

If $(i,j)\in I_{21}$, then \[y_{ij}=\frac{c_{i1} A_{1j}+\ldots +c_{im}A_{mj}}{D}.\]

If $(i,j)\in I_{22}$, then \[y_{ij}=c_{ij}-c_{i1}y_{1j}-\ldots - c_{im}y_{mj}
=c_{ij}-\frac{\sum_{u,v=1}^m c_{iu}A_{uv}c_{vj}}{D}.\]

Additionally, $(D)_{kl}D=0$ for $(kl)\in I_{11}\cup I_{22}$, and 
$(D)_{kl}D=D y_{kl}$ for $(kl)\in I_{12}\cup I_{21}$.

\begin{lm}\label{l5.1}
If $(ij)\in I_{21}$ and $(kl)\in I_{12}$, then $(y_{ij})_{kl}D=\frac{A_{kj} y_{il}}{D}$.
\end{lm}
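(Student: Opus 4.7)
The plan is to expand $y_{ij}$ for $(i,j)\in I_{21}$ as $y_{ij}=\sum_{u=1}^m c_{iu}B_{uj}$ where $B_{uj}=A_{uj}/D=(X_{11}^{-1})_{uj}$ is even, and apply the super-Leibniz rule for the odd right superderivation $_{kl}D$. Using the sign convention $(ab)_{kl}D = a(b)_{kl}D + (-1)^{|b|}(a)_{kl}D\cdot b$ (consistent with the $\GL(1|1)$ computations earlier in the paper), and noting $|c_{iu}|=1$, $|B_{uj}|=0$, I obtain
\[(y_{ij})_{kl}D = \sum_u (c_{iu})_{kl}D \cdot B_{uj} + \sum_u c_{iu}\,(B_{uj})_{kl}D.\]
Since $(c_{iu})_{kl}D=\delta_{uk}c_{il}$, the first sum collapses to $c_{il}\,A_{kj}/D$, which is the ``bare'' piece of the claimed answer.

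The heart of the proof is the second sum, for which I need an entrywise formula for $(X_{11}^{-1})_{kl}D$. Starting from $X_{11}\cdot X_{11}^{-1}=I_m$, applying $_{kl}D$ entrywise to the matrix product (no signs appear since all entries of both factors are even), I obtain
\[(X_{11}^{-1})_{kl}D = -X_{11}^{-1}\bigl((X_{11})_{kl}D\bigr)X_{11}^{-1}.\]
Because $(x_{ab})_{kl}D=\delta_{bk}x_{al}$, the matrix $(X_{11})_{kl}D$ has only its $k$-th column nonzero, equal to the column vector $(x_{1l},\ldots,x_{ml})^T$. Multiplying out the rank-one correction and using $y_{ul}=\sum_a B_{ua}x_{al}$ for $(u,l)\in I_{12}$, the $(u,j)$ entry simplifies to $-y_{ul}\cdot B_{kj}$, i.e.\ $(B_{uj})_{kl}D = -y_{ul}\,A_{kj}/D$.

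Substituting this into the second sum gives $-\sum_u c_{iu}y_{ul}\,A_{kj}/D$. Since $A_{kj}$ and $D$ are even they commute past everything, so combining the two sums yields
\[(y_{ij})_{kl}D = \frac{A_{kj}}{D}\Bigl(c_{il} - \sum_u c_{iu}y_{ul}\Bigr) = \frac{A_{kj}\,y_{il}}{D},\]
where the last equality uses the definition of $y_{il}$ for $(i,l)\in I_{22}$.

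The main obstacle is bookkeeping for super-signs: although $X_{11}$ has only even entries, the derived matrix $(X_{11})_{kl}D$ has odd entries $x_{al}\in I_{12}$, so one must verify that differentiating $X_{11}X_{11}^{-1}=I$ entrywise really produces no parasitic signs, and that the even factor $A_{kj}/D$ can be pulled past the odd element $c_{iu}y_{ul}$ without sign. Once these checks are made, the argument is a short direct computation.
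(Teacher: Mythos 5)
Your proof is correct, and it takes a genuinely different route from the paper's. The paper begins from the same expression $y_{ij}=\sum_u c_{iu}A_{uj}/D$ but differentiates it wholesale via the quotient rule (using $(D)_{kl}D=D\,y_{kl}$), then disposes of the resulting mess by a Laplace expansion of $A_{uj}$ along column $k$ together with Jacobi's theorem on minors of the adjugate, $A_{uj}A_{kv}-A_{uv}A_{kj}=(-1)^{u+j+k+v}D\,A(uk|vj)$. You instead isolate $B_{uj}=(X_{11}^{-1})_{uj}$ as a separate even factor and compute its derivative from the identity $X_{11}X_{11}^{-1}=I_m$, obtaining $(X_{11}^{-1})_{kl}D=-X_{11}^{-1}\bigl((X_{11})_{kl}D\bigr)X_{11}^{-1}$ and hence $(B_{uj})_{kl}D=-y_{ul}B_{kj}$; this replaces the combinatorial adjugate identity by the functorial matrix-inverse derivative, and the recombination $c_{il}-\sum_u c_{iu}y_{ul}=y_{il}$ is exactly the defining relation for $(i,l)\in I_{22}$. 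Your sign convention $(ab)_{kl}D=a\,(b)_{kl}D+(-1)^{|b|}(a)_{kl}D\cdot b$ for the odd superderivation is the one the paper uses implicitly (it can be checked, for example, against the computation of $(y_{ij})_{kl}D=y_{il}y_{kj}$ for $(ij)\in I_{22}$, $(kl)\in I_{12}$ in Proposition \ref{p12.2}), so the super-sign bookkeeping you flag does go through: the only potentially worrying product, $X_{11}^{-1}\bigl((X_{11})_{kl}D\bigr)X_{11}^{-1}$, has an even matrix on either side of an odd one, so the ordering is harmless, and $A_{kj}/D$ being even lets it slide past $c_{iu}y_{ul}$ without sign. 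The trade-off: your argument is shorter, conceptually transparent, and entirely avoids the Jacobi identity; the paper's is self-contained at the level of determinant cofactor manipulations and stays closer in style to the bideterminant combinatorics used elsewhere in the paper. Either proof works; yours is arguably the cleaner derivation of the key fact $(B_{uj})_{kl}D=-y_{ul}A_{kj}/D$.
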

\begin{proof}
We compute 
$(y_{ij})_{kl}D= (\frac{c_{i1}A_{1j}+\ldots + c_{im}A_{mj}}{D})_{kl}D=$
\[\frac{D(c_{i1}A_{1j}+\ldots + c_{im}A_{mj})_{kl}D-
(c_{i1}A_{1j}+\ldots + c_{im}A_{mj})(A_{k1}c_{1l}+\ldots+A_{km}c_{ml})}{D^2}.\]
Using the Laplace expansion 
\[A_{uj}=\sum_{v\neq j} (-1)^{u+k+v+j} A(uk|vj) c_{vk},\] 
we rewrite
\[(c_{i1}A_{1j}+\ldots + c_{im}A_{mj})_{kl}D
=c_{il}A_{kj}+\sum_{u\neq k}\sum_{v\neq j} (-1)^{u+k+v+j}c_{iu} A(uk|vj) c_{vl}.
\]
Break the expression
\[D(c_{i1}A_{1j}+\ldots + c_{im}A_{mj})_{kl}D-
(c_{i1}A_{1j}+\ldots + c_{im}A_{mj})(A_{k1}c_{1l}+\ldots+A_{km}c_{ml})\]
into two parts. The first part consists of all terms 
\[\begin{aligned}&
c_{il}A_{kj}D- \sum_{v=1}^m c_{ik}A_{kj}A_{kv}c_{vl}-\sum_{u=1}^m c_{iu}A_{uj} A_{kj}c_{jl}-c_{ik}A_{kj}^2c_{jl}\\
&=A_{kj}(c_{il}D-\sum_{v=1}^m c_{ik}A_{kv}c_{vl}-\sum_{u=1}^m c_{iu}A_{uj}c_{jl}-c_{ik}A_{kj}c_{jl})
\end{aligned}\]
that are multiples of $A_{kj}$.

The second part consists of terms that are multiples of $c_{iu}c_{vl}$, where $u\neq k$ and $v\neq j$.
The multiples of $c_{iu}c_{vl}$, where $u\neq k$ and $v\neq j$, are
\[\begin{aligned}&-c_{iu}A_{uj}A_{kv}c_{vl}+D(-1)^{u+k+v+j}c_{iu}A(uk|vj)c_{vl}\\
&=c_{iu}(-A_{uj}A_{kv}+D(-1)^{u+k+v+j}A(uk|vj))c_{vl}\\
&=-c_{iu}A_{uv}A_{kj}c_{vl}
\end{aligned}\]
using the Jacobi theorem on minors of the adjoint matrix stating that
\[A_{uj}A_{kv}-A_{uv}A_{kj}=D(-1)^{u+j+k+v}A(uk|vj).
\]
Combining the expression for all terms we obtain 
\[\begin{aligned}&D(c_{i1}A_{1j}+\ldots + c_{im}A_{mj})_{kl}D-
(c_{i1}A_{1j}D+\ldots + c_{im}A_{mj})(A_{k1}c_{1l}+\ldots+A_{km}c_{ml})\\
&=A_{kj}(c_{il}D-\sum_{u,v=1}^m c_{iu}A_{uv}c_{vl})=DA_{kj}y_{il}.
\end{aligned}\]
Thus $(y_{ij})_{kl}D=\frac{A_{kj}y_{il}}{D}.$
\end{proof}

The action of all right superderivations $_{kl}D$ on elements $y_{ij}$ is described in the following table.
This table is of independent interest, and completes the formulae established in \cite{irred} and \cite{prim}.

\begin{pr}\label{p12.2}
The values $(y_{ij})_{kl}D$ are given as follows.
\[\begin{array}{l|cccccc}
(ij)\backslash (kl) & &  I_{11}  &  \qquad I_{12} &  I_{21} & \quad I_{22} \\
\hline\\
I_{11}&&\delta_{jk} y_{il}&\quad \delta_{jk}\sum_{u=1}^m y_{iu}y_{ul}&0&\quad 0\\
I_{12}&&-(1-\delta_{ki})\delta_{il}y_{kj}&\quad y_{il}y_{kj}&\delta_{jk}\delta_{il}&\quad \delta_{jk}y_{il}\\
I_{21}&&0&\quad \frac{y_{il}A_{kj}}{D}&\quad 0&\quad 0\\
I_{22}&&0&\quad y_{il}y_{kj}&\quad 0&\quad \delta_{jk} y_{il}
\end{array}\]
\end{pr}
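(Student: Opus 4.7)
The plan is to verify each of the sixteen entries of the table by case analysis, indexed by the block containing $(ij)$. Throughout, I would apply the super-Leibniz rule for the right superderivation $_{kl}D$ (an even derivation when $(kl)\in I_{11}\cup I_{22}$, odd when $(kl)\in I_{12}\cup I_{21}$), the basic action $(c_{ab})_{kl}D=\delta_{bk}c_{al}$ on matrix generators, and the auxiliary identities $(D)_{kl}D=0$ if $(kl)\in I_{11}\cup I_{22}$ and $(D)_{kl}D=Dy_{kl}$ if $(kl)\in I_{12}\cup I_{21}$ recorded immediately before the statement.

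For the top row ($(ij)\in I_{11}$), the identity $y_{ij}=c_{ij}$ reduces everything to the basic action, and the resulting $\delta_{jk}c_{il}$ is rewritten in the $y$-variables: when $(kl)\in I_{11}$ one has $c_{il}=y_{il}$ directly, while for $(kl)\in I_{12}$ one substitutes $c_{il}=\sum_{u=1}^m y_{iu}y_{ul}$, the $(i,l)$-component of the matrix identity $X_{12}=X_{11}Y_{12}$ (equivalent to $\phi^*(X_{12})=X_{11}^{-1}X_{12}$). The remaining two subcases vanish by the Kronecker delta since $j\le m<k$. For the row $(ij)\in I_{12}$, I would differentiate the same relation $c_{ij}=\sum_{u=1}^m c_{iu}y_{uj}$ and solve for $(y_{uj})_{kl}D$ by left-multiplication with $X_{11}^{-1}$; cancellations from the Laplace expansion $\sum_u c_{iu}(X_{11}^{-1})_{uv}=\delta_{iv}$ produce exactly the four tabulated expressions according to the block of $(kl)$. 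For the bottom row ($(ij)\in I_{22}$), one differentiates $y_{ij}=c_{ij}-\sum_{u=1}^m c_{iu}y_{uj}$ and substitutes the values of $(y_{uj})_{kl}D$ computed in the middle rows.

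The genuinely delicate row is $(ij)\in I_{21}$, where $y_{ij}=\sum_u c_{iu}A_{uj}/D$ has both a nontrivial numerator and a nontrivial denominator. Its only nonzero table entry, the subcase $(kl)\in I_{12}$, is exactly Lemma~\ref{l5.1} and rests on the Jacobi identity for minors of the adjugate matrix. The three vanishing entries are handled by separate short arguments: for $(kl)\in I_{11}$, the contributions of $(c_{iu})_{kl}D$ and $((X_{11}^{-1})_{uj})_{kl}D$ in the super-Leibniz expansion cancel via the inverse-matrix identity $(X_{11}^{-1})_{kl}D=-E_{lk}X_{11}^{-1}$ obtained by differentiating $X_{11}X_{11}^{-1}=I_m$; for $(kl)\in I_{21}$ or $I_{22}$, every factor in each Leibniz term is annihilated by $_{kl}D$ for index reasons ($u\le m<k$ kills $(c_{iu})_{kl}D$, and all cofactors of $X_{11}$ and the determinant $D$ are fixed by $_{kl}D$ when its indices lie outside the $I_{11}$ block).

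The main obstacle is concentrated in Lemma~\ref{l5.1}, which has already been proved: the Jacobi identity for minors of the adjugate is the only place where the computation does not collapse by direct index bookkeeping. Once that identity is available, the remaining fifteen table entries follow from organized application of the super-Leibniz rule together with the Laplace expansion and the inverse-matrix differentiation formula above.
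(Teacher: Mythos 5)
Your plan is correct and, at bottom, it is the same argument the paper runs: block-by-block case analysis via the super-Leibniz rule, with the only genuinely nontrivial case being $(ij)\in I_{21}$, $(kl)\in I_{12}$, which both you and the paper reduce to the Jacobi identity for minors of the adjugate (this is exactly Lemma~\ref{l5.1}). The difference is one of organization rather than method: for the $I_{12}$ and $I_{22}$ rows the paper delegates to Lemmas~6.1 and~2.1 of \cite{irred}, whereas you propose to differentiate the defining relation $c_{ij}=\sum_{u=1}^m c_{iu}y_{uj}$, solve by left-multiplication with $X_{11}^{-1}$, and combine with the inverse-matrix formula $(X_{11}^{-1})_{kl}D=-E_{lk}X_{11}^{-1}$. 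This re-derivation is clean and makes the proof self-contained; it buys you independence from the external reference at the cost of a few lines of bookkeeping.

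One point worth flagging if you carry out the middle-row computation in full: your method for the sub-case $(ij)\in I_{12}$, $(kl)\in I_{11}$ yields $(y_{ij})_{kl}D=-\delta_{il}y_{kj}$, whereas the stated table entry is $-(1-\delta_{ki})\delta_{il}y_{kj}$. The two expressions agree whenever $k\neq l$ (since $\delta_{il}=1$ with $k\neq l$ forces $k\neq i$), but they disagree on the diagonal $k=l=i$. A direct check for $\GL(1|1)$, namely $(y_{12})_{11}D=(c_{11}^{-1}c_{12})_{11}D=-c_{11}^{-1}c_{12}=-y_{12}$, and the observation that $-\delta_{il}y_{kj}$ is exactly what makes the $(I_{22},I_{11})$ cancellation come out to $0$ on the diagonal, both support your value; the factor $(1-\delta_{ki})$ in the table appears to be spurious. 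So do not be alarmed when your computation does not reproduce that factor, and note that nothing downstream is affected since only off-diagonal odd superderivations are used in the rest of Section~4.
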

\begin{proof}
If $(ij), (kl)\in I_{11}$, then $(c_{ij}) _{kl}D = \delta_{jk} c_{il}$ from the definition of $_{kl}D$.

If $(ij)\in I_{11}$ and $(kl)\in I_{12}$, then 
$(c_{ij})_{kl}D= \delta_{jk} c_{il}$. Since $c_{il}=\sum_{u=1}^m c_{uj}y_{ul}$, the formula follows.

The cases $(ij)\in I_{11}$ and $(kl)\in I_{21}\cup I_{22}$ are trivial.

The case $(ij) \in I_{12}$ and $(kl)\in I_{11}$ follows from Lemma 6.1 of \cite{irred}.

The case $(ij), (k,l) \in I_{12}$ follows from Lemma 2.1 of \cite{irred}.

If $(i,j)\in I_{12}$, $(kl)\in I_{21}$, then $(y_{ij})_{kl}D=\delta_{jk} \frac{A_{i1}c_{il}+ \ldots + A_{im}c_{ml}}{D}$. Since $A_{i1}c_{il}+ \ldots + A_{im}c_{ml}=\delta_{il}D$, the formula follows.

The case $(ij)\in I_{12}$ and $(kl)\in I_{22}$ follows from Lemma 6.1 of \cite{irred}.

If $(ij)\in I_{21}$ and $(kl)\in _{11}$, then using the arguments from the proof of Lemma 6.1 of \cite{irred} we compute
$(y_{ij})_{kl}D=c_{il}A_{kj}-c_{il}A_{kj}=0$.

The case $(ij)\in I_{21}$ and $(kl)\in I_{12}$ follows from Lemma \ref{l5.1}.

The cases $(ij)\in I_{22}$ and $(kl)\in I_{21}\cup I_{22}$ are trivial.

If $(ij)\in I_{22}$ and $(kl)\in I_{11}$, then using Lemma 6.1 of \cite{irred} we compute
$(y_{ij})_{kl}D=-c_{il}y_{kj}-(-c_{il}y_{kj})=0$.

If $(ij)\in I_{22}$ and $(kl)\in I_{12}$, then using Lemma 2.1 of \cite{irred} we compute
$(y_{ij})_{kl}D=c_{il}y_{kj}-c_{i1}y_{1l}y_{kj} - \ldots - c_{im}y_{ml}y_{kj}=y_{il}y_{kj}$.

If $(ij)\in I_{22}, (kl)\in I_{21}$, then we use the formula from the case $(ij) \in I_{12}, (kl)\in I_{21}$ to derive
$(y_{ij})_{kl}D=(1-\delta_{jk})(c_{il}-c_{il})=0$.

If $(ij)\in I_{22}$ and $(kl)\in I_{22}$, then using Lemma 6.1 of \cite{irred} we compute
$(y_{ij})_{kl}D=\delta{jk}(c_{il} - c_{i1}y_{1l}-\ldots - c_{im}y_{ml})=\delta_{jk}y_{il}$.
\end{proof}

Analogously, we obtain the following formulae.

\begin{pr}\label{p12.3}
The values $D_{kl}(y_{ij})$ are given as follows.
\[\begin{array}{l|cccccc}
(ij)\backslash (kl) & &  I_{11}  &  \qquad I_{21} &  I_{12} & \quad I_{22} \\
\hline\\
I_{11}&&\delta_{il} y_{kj}&\quad \delta_{il}\sum_{u=1}^m y_{ku}y_{uj}&0&\quad 0\\
I_{21}&&-(1-\delta_{lj})\delta_{kj}y_{il}&\quad y_{kj}y_{il}&\delta_{il}\delta_{jk}&\quad \delta_{il}y_{kj}\\
I_{12}&&0&\quad \frac{y_{kj}A_{il}}{D}&\quad 0&\quad 0\\
I_{22}&&0&\quad y_{kj}y_{il}&\quad 0&\quad \delta_{il} y_{kj}
\end{array}\]
\end{pr}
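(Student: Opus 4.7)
The plan is to mirror the proof of Proposition \ref{p12.2} entry by entry, exploiting the symmetry between the left and right regular representations of $G$ on $K[G]$. In principle one could invoke the transposition anti-automorphism $\tau$ of $K[G]$ (with appropriate super signs) that interchanges $\rho_l$ with $\rho_r$ and swaps the blocks $I_{12} \leftrightarrow I_{21}$ while fixing $I_{11}$ and $I_{22}$. Since the isomorphism $\phi$ is built symmetrically from $X_{11}^{-1}X_{12}$ and $X_{21}X_{11}^{-1}$ and from the Schur complement $X_{22} - X_{21}X_{11}^{-1}X_{12}$, the element $\tau(y_{ij})$ equals (up to an explicit sign) $y_{j'i'}$ for the transposed indices, and Proposition \ref{p12.3} would follow from Proposition \ref{p12.2} by a clean bookkeeping check. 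However, pinning down the sign conventions in the super setting is fiddly, and the paper signals "Analogously" — so I would simply repeat the case-by-case computation.

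I would proceed block by block exactly as in Proposition \ref{p12.2}. First the trivial entries: when $(ij)\in I_{11}$ we use $y_{ij}=c_{ij}$ directly, and the identity $c_{kj}=\sum_u c_{ku}y_{uj}$ converts the $I_{21}$ column into the advertised form $\delta_{il}\sum_u y_{ku}y_{uj}$; the $I_{12}$ and $I_{22}$ columns vanish because $D_{kl}$ sees no $X_{12}$ or $X_{22}$ variables inside a pure $I_{11}$ generator. For $(ij)\in I_{21}$, write $y_{ij}=\frac{c_{i1}A_{1j}+\ldots+c_{im}A_{mj}}{D}$, apply $D_{kl}$ via the Leibniz rule using $D_{kl}(D)=0$ on $I_{11}\cup I_{22}$ and $D_{kl}(D)=y_{kl}D$ on $I_{12}\cup I_{21}$, and recognize the outcomes. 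The $I_{22}$ cases follow by substituting $y_{ij}=c_{ij}-\sum_u c_{iu}y_{uj}$ and invoking the already-proved $I_{12}$ formulas, with the same cancellations that produce the zeros in Proposition \ref{p12.2}.

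The main obstacle is the analog of Lemma \ref{l5.1}, namely the case $(ij)\in I_{12}$, $(kl)\in I_{21}$, where one must prove
\[ D_{kl}(y_{ij})=\frac{y_{kj}A_{il}}{D}. \]
I would reproduce the Lemma \ref{l5.1} computation but applied on the left: expand $D_{kl}$ on $y_{ij}=\frac{A_{i1}c_{1j}+\ldots+A_{im}c_{mj}}{D}$ using Leibniz; expand each $A_{iv}$ by Laplace along the row hit by $D_{kl}$; split the resulting sum into a principal piece proportional to $A_{il}$ and a remainder of terms $c_{ku}A(il|uv)c_{vj}$ with $u\neq i$ and $v\neq l$; and collapse the remainder via the Jacobi identity on minors of the adjoint matrix,
\[ A_{ul}A_{iv}-A_{uv}A_{il}=D\,(-1)^{i+l+u+v}A(il|uv), \]
so that the correction terms combine into a further multiple of $A_{il}$. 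Dividing by $D$ and recognizing $y_{kj}=\frac{c_{k1}A_{1j}+\ldots+c_{km}A_{mj}}{D}$ produces the asserted formula. This is the only genuinely new calculation; once it is in hand, the remaining non-trivial cases are routine Leibniz applications paralleling the right-hand proof.
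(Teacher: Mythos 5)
Your proposal is correct, but it takes a genuinely different route from the paper. The paper's proof of Proposition~\ref{p12.3} is a one-line symmetry argument: it obtains the table by formally interchanging $2\leftrightarrow 1$, $i\leftrightarrow j$, $l\leftrightarrow k$ and $y_{uv}\to y_{vu}$ in the table of Proposition~\ref{p12.2} (the reference to Lemma~\ref{l5.1} in the paper's proof is evidently a typo for the table). This is exactly the transpose/anti-automorphism argument you raise in your first paragraph and then set aside on the grounds that the super sign bookkeeping is ``fiddly.'' In fact the bookkeeping is benign here: the transpose is an even anti-automorphism of $K[G]$ that preserves parity, commutes with $\phi^*$ (since $\phi$ is built symmetrically from $X_{21}X_{11}^{-1}$ and $X_{11}^{-1}X_{12}$ and the Schur complement), and exchanges $\rho_l$ with $\rho_r$, hence $D_{kl}$ with $_{lk}D$ without introducing extra signs; verifying this once replaces the entire case-by-case recomputation. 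What you propose instead --- redoing the Leibniz-rule computations block by block and proving the analog of Lemma~\ref{l5.1} for $D_{kl}(y_{ij})$ with $(ij)\in I_{12}$, $(kl)\in I_{21}$ via a Laplace expansion and the Jacobi identity on minors of the adjugate --- is entirely sound and more self-contained: it does not presuppose the compatibility of $\phi^*$ with transposition and can be read independently of Proposition~\ref{p12.2}. The trade-off is length: the paper's approach dispatches all sixteen entries at once, whereas yours repeats the hardest calculation (the Jacobi-identity case) a second time and re-derives the routine Leibniz cases. Both are correct; the paper's is the more economical, yours the more explicit.
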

\begin{proof}
Interchange $2\leftrightarrow 1$, $i\leftrightarrow j$, $l\leftrightarrow k$ and $y_{uv}\rightarrow y_{vu}$
in the table of Lemma \ref{l5.1}.
\end{proof} 

\begin{rem}
All divided powers $_{kl}D^{(e)}$ and $D_{kl}^{(e)}$, where $1\leq k\neq l\leq m$ or $m+1\leq k\neq l\leq m+n$ and $e>1$ vanish on all $y_{ij}$. 
This follows from $(y_{ij})_{kl}D^2 =0$ unless $k=l=j$, and $D^2_{kl}(y_{ij})=0$ unless $k=l=i$.
\end{rem}

\subsection{$G$-bimodules $C_{\leq \lambda}$}

\begin{lm}\label{l6.4}
$\phi^*(M_{\leq \lambda})$ is a $G$-superbimodule of $\phi^*(K[Y_{21}]\otimes M_{\leq \lambda}\otimes K[Y_{12}])$.
\end{lm}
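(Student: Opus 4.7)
The plan is to prove the stronger assertion that $\phi^*(K[Y_{21}]\otimes M_{\leq \lambda}\otimes K[Y_{12}])$, embedded in $K[G]$ via $\phi^*$, is a $G$-superbimodule of $K[G]$. Since it visibly contains $\phi^*(M_{\leq \lambda})$ (by taking the outer tensor factors to be $1$), the $G$-superbimodule of $K[G]$ generated by $\phi^*(M_{\leq \lambda})$ then lies inside it, giving the claim. The $G$-bimodule structure on $K[G]$ is determined by the action of $\Dist(G)$, and this superalgebra is generated by $\Dist(G_{ev})$ together with the odd primitives $e_{kl}$ for $(kl)\in I_{12}\cup I_{21}$ (their squares vanish in $\mathfrak{gl}(m|n)$, so no higher divided powers contribute, consistent with the remark following Proposition \ref{p12.3}). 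So I would check closure under these two families.

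Closure under $\Dist(G_{ev})$ is immediate from the tensor structure: under $\phi$, left translation of $G$ by $g\in G_{ev}$ acts on a triple $(u^-, g', u^+)$ as $(gu^-g^{-1}, gg', u^+)$, and the right translation is analogous. The pulled-back $G_{ev}$-action on $K[Y_{21}]\otimes K[G_{ev}]\otimes K[Y_{12}]$ thus preserves the tensor-product decomposition and restricts to the regular $G_{ev}$-bimodule action on the middle factor. Since $M_{\leq\lambda}$ is a $G_{ev}$-bimodule by the Proposition in Section 3, the subspace $K[Y_{21}]\otimes M_{\leq\lambda}\otimes K[Y_{12}]$ is invariant on both sides.

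For the odd superderivations I focus on $_{kl}D$ with $(kl)\in I_{12}$; the cases $(kl)\in I_{21}$ and the left superderivations $D_{kl}$ are analogous by the symmetry of Propositions \ref{p12.2} and \ref{p12.3}. Applied to $\phi^*(a)\phi^*(b)\phi^*(c)$ with $a\in K[Y_{21}]$, $b\in M_{\leq\lambda}$, $c\in K[Y_{12}]$ via super-Leibniz and the tables, the derivative splits into three contributions, each of which can then be rearranged into the canonical form $y_-\cdot m\cdot y_+$ with $y_-\in K[Y_{21}]$, $m\in K[G_{ev}]$, $y_+\in K[Y_{12}]$, using super-commutativity of $K[G]$ (the subalgebra $K[G_{ev}]$ is even and commutes with everything). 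The contributions from differentiating $\phi^*(c)$ stay in $K[Y_{12}]$; those from differentiating $\phi^*(b)$ attach a $K[Y_{12}]$-factor while modifying the middle via an element of $\mathfrak{g}_{ev}$ acting on $b$, preserving $M_{\leq\lambda}$.

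The main obstacle is the case where the derivation hits $\phi^*(a)$: by Lemma \ref{l5.1}, $(y_{ij})_{kl}D=y_{il}A_{kj}/D$ for $(ij)\in I_{21}$ lies entirely in $K[G_{ev}]$, so it modifies the middle factor by multiplication rather than by a $\mathfrak{g}_{ev}$-action. To show this still keeps the middle in $M_{\leq\lambda}$, I would use the explicit generalized-bideterminant basis from Section 3 and verify directly that multiplication of a generalized bideterminant of shape $\mu\leq\lambda$ by $y_{il}(X_{11}^{-1})_{jk}$ expands as a linear combination of generalized bideterminants whose shapes remain $\leq\lambda$ in the Donkin-Koppinen order $\leq$ constructed in Subsection \ref{ss33}. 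Verifying that the resulting weight shifts match the $\pi$-type steps that $\leq$ was designed to accommodate is the technical heart of the proof.
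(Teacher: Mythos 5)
Your proposed strategy has a fatal flaw: you plan to prove the stronger statement that $\phi^*(K[Y_{21}]\otimes M_{\leq \lambda}\otimes K[Y_{12}])$ is itself a $G$-superbimodule, but this is false, and the paper says so explicitly in the Remark at the end of Section 4: ``a $G_{ev}$-bimodule $\phi^*(K[Y_{21}]\otimes M_{\lambda,l}\otimes K[Y_{12}])$ is not invariant under the action of superderivations $_{kl}D$ for $(kl)\in I_{12}$ and $D_{kl}$ for $(kl)\in I_{21}$'' (recall $M_{\lambda,0}=M_{\leq\lambda}$). The obstruction is exactly the one you flag as the ``technical heart'' and then assert can be handled: when the odd superderivation hits $\phi^*(a)$ for $a\in K[Y_{21}]$, the factor $y_{il}A_{kj}/D$ has $\GL(m)$-degree $-1$ and $\GL(n)$-degree $+1$. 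Since $M_{\leq\lambda}$ (after the reduction $\Gamma=\Gamma_r$ made in Section 1) consists entirely of weights of fixed bidegree $(r^+|r^-)$, multiplication by $y_{il}A_{kj}/D$ shifts the bidegree to $(r^+-1|r^-+1)$ and necessarily leaves $M_{\leq\lambda}$. No choice of refinement $\leq$ within the length class $(r^+|r^-)$ can accommodate this shift. This is precisely why, in the proof of the Theorem, the paper is forced to introduce the graded pieces $M_{\lambda,l}=M_{\leq \lambda-l\alpha}$ with $\alpha=\epsilon_m-\epsilon_{m+1}$, and to define $C_{\leq\lambda}$ as $\phi^*\bigl(K[Y_{21}]\otimes(\oplus_{l\geq 0}M_{\lambda,l})\otimes K[Y_{12}]\bigr)$: the multiplication step sends $M_{\lambda,l}$ into $M_{\lambda,l+1}$, not back into $M_{\leq\lambda}$.

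The paper's proof of the Lemma avoids the false stronger claim entirely: it only needs to show that applying a single odd superderivation to an element of $\phi^*(M_{\leq\lambda})$ (which involves no factors from $K[Y_{21}]$ or $K[Y_{12}]$) lands inside $\phi^*(K[Y_{21}]\otimes M_{\leq\lambda}\otimes K[Y_{12}])$. Concretely, for a generalized bideterminant $v^+$ from the $\GL(m)$ side one gets from Proposition \ref{p12.2} that $(\phi^*(v^+))_{kl}D=\sum_u\phi^*(v_u)y_{ul}$ where $v_u\in M_{\unlhd\zeta^+}$ after straightening and $y_{ul}\in K[Y_{12}]$; similarly for $v^-$. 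The dangerous case $(ij)\in I_{21}$, $(kl)\in I_{12}$ never arises because $M_{\leq\lambda}$ involves no odd $y$-variables. Your closure-under-$\Dist(G_{ev})$ argument and your bookkeeping for the derivative landing on $\phi^*(b)$ or $\phi^*(c)$ are fine and match the paper; the argument breaks only at the step you yourself identify, and it breaks because the length bookkeeping makes the desired closure impossible.
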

\begin{proof}
We have already mentioned that $M_{\leq \lambda}$ is a $G_{ev}$-bimodule. It was observed in \cite{prim} that $\phi^*$ is a $G_{ev}$-morphism. Therefore $\phi^*(M_{\leq \lambda})$
is a $G_{ev}$-bimodule of $\phi^*(K[Y_{21}]\otimes M_{\leq \lambda}\otimes K[Y_{12}])$.

Next, consider the action of odd superderivation $_{kl}D$ for $(kl)\in I_{12}$. Let $v^+=D^{b^+}T^{\zeta^+}(i^+:j^+)$ be a generalized bideterminant.
Denote by $v_u$, for $u=1, \ldots, m$, 
the sum of generalized bideterminants obtained by replacing one of the entries $i$ in the tableau  $T^{\zeta^+}_j$ by the letter $u$. By Proposition \ref{p12.2}, $(\phi^*(v^+))_{kl}D$ is a sum  $\sum_{u=1}^m \phi^*(v_u) y_{ul}$. Using the straightening algorithm for bideterminants,
we can replace all appearing generalized bideterminants by linear combinations of basis elements of 
$M_{\unlhd \zeta^+}$. Applying $\phi^*$ to those elements, we infer that 
$(\phi^*(v^+))_{kl}D\in \phi^*(K[Y_{21}]\otimes M_{\leq \lambda}\otimes K[Y_{12}])$.
Using Proposition \ref{p12.3}, we obtain an analogous statement for $\phi^*(v^+)$ and $D_{kl}$, where $(kl)\in I_{21}$.

Let $v^-=D^{b^-}T^{\zeta^-}(i^-:j^-)$ be a generalized bideterminant. Denote by $w$ the sum of generalized bideterminants obtained by replacing one of the entries $i$ in 
$T^{\zeta^-}_{i^-}$ by the letter $l$. By Proposition \ref{p12.2}, $(\phi^*(v^-))_{kl}D=\phi^*(w) y_{kj}$. Using the straightening algorithm for bideterminants appearing in $w$, we can replace them by linear combinations of basis elements in $M_{\unlhd \zeta^-}$.
Applying $\phi^*$ to those elements, we infer that $(\phi^*(v^-))_{kl}D\in \phi^*(K[Y_{21}]\otimes M_{\leq \lambda}\otimes K[Y_{12}])$.
Using Proposition \ref{p12.3}, we obtain an analogous statement for $\phi^*(v^-)$ and $D_{kl}$, where $(kl)\in I_{12}$.
\end{proof}

We will denote $M^*_{\leq\lambda}= \phi^*(M_{\leq \lambda})$.

Assume that $|\lambda^+|=r^+$ and $|\lambda^-|=r^-$.
Corresponding to the action of the odd root $\alpha=\epsilon_m-\epsilon_{m+1}$, together with the $G_{ev}$-bimodule
$M_{\lambda,0}=M_{\leq \lambda}$, we consider also $M_{\lambda,l}=M_{\leq \lambda-l\alpha}$ for $l\geq 0$.
The $K$-space $M_{\lambda,l}$ is a finite-dimensional $G_{ev}$-bimodule, and the lengths $(|\mu^+|||\mu^-|)$ of its weights $\mu$ range from 
$(r^+-l|r^-+l)$ to $(r^+-l-mn|r^-+l+mn)$.

We define 
\[C_{\leq \lambda}=\phi^*(\oplus_{l\geq 0}K[Y_{21}]\otimes (\oplus_{l\geq 0} M_{\lambda,l}) \otimes K[Y_{12}]),\] and 
\[C_{< \lambda}= \phi^*(K[Y_{21}]\otimes (M_{<\lambda} \oplus \oplus_{l>0} M_{\lambda,l}) \otimes K[Y_{12}] ).\]

Recall from Example 5.1 of \cite{sz}, that every dominant weight $\lambda$ of $G$ has finitely many predecessors with respect to the order $\unlhd$. One predecessor is $\lambda-\alpha$, where $\alpha=\epsilon_{m}-\epsilon_{m+1}$; and the remaining predecessors are found among weights $\mu$ that satisfy $\mu\lhd_s \lambda$.

In the terminology of Section 3 of \cite{markozub}, the set of all weights of $G$ is interval-finite and good. 
By Proposition 3.10 of \cite{markozub}, every finitely generated ideal $\Gamma$ of the set of all weights of $G$  
has a descending chain of finitely-generated subideals
\[\ldots \subseteq \Gamma_{k+1}\subseteq \Gamma_k \subseteq \ldots \subseteq \Gamma_1\subseteq \Gamma_0=\Gamma\]
such that $\Gamma_k \setminus \Gamma_{k+1}$ is finite for every $k\geq 0$ and
$\cap_{k\geq 0} \Gamma_k = \emptyset$. We choose a unique such filtration for which the elements of $\Gamma_k \setminus \Gamma_{k+1}$ are pairwise incomparable generators of $\Gamma_k$ for each $k\geq 0$.

Assume an ideal $\Gamma$ is generated by incomparable weights $\lambda_1, \ldots, \lambda_s$.
Then $\Gamma_0\setminus \Gamma_1=\{\lambda_1, \ldots, \lambda_s\}$, and 
$\Gamma_1\setminus \Gamma_2=\{\mu_1, \ldots, \mu_t\}$
is a subset of all predecessors of weights $\lambda_1, \ldots, \lambda_s$, 
$\Gamma_2\setminus\Gamma_3$ is a subset of all predecessors of weights $\mu_1, \ldots, \mu_t$, and so on.

We denote by $C_{\leq \Gamma_k}$ the sum of $C_{\leq \mu}$, where $\mu\in \Gamma_k$.

\begin{tr}
Let $\Gamma$ be a finitely generated ideal of $X(T)^+$.
Corresponding to the descending chain of finitely-generated subideals of $\Gamma$
\[\ldots \subseteq \Gamma_{k+1}\subseteq \Gamma_k \subseteq \ldots \subseteq \Gamma_1\subseteq \Gamma_0=\Gamma\]
as above, 
there is a descending filtration of $C_{\Gamma}=O_{\Gamma}(K[G])$ by locally finite-dimensional $G$-superbimodules
\[\ldots \subseteq C_{\leq \Gamma_{k+1}}\subseteq C_{\leq \Gamma_k} \subseteq \ldots \subseteq C_{\leq \Gamma_1}\subseteq C_{\leq \Gamma_0}=\mathcal{O}_{\Gamma}(K[G])=C_{\Gamma},\]\
where $C_{\Gamma_k}=O_{\Gamma_k}(K[G])$ and
\[C_{\leq \Gamma_k}/C_{\leq \Gamma_{k+1}} \simeq \oplus_{\mu\in \Gamma_k\setminus \Gamma_{k+1}}
H^0_G(\mu)\otimes (V_G(\mu)^*)^r\]
as $G$-superbimodules.
If the chain of ideals 
\[\ldots \subseteq \Gamma_{k+1}\subseteq \Gamma_k \subseteq \ldots \subseteq \Gamma_1\subseteq \Gamma_0=\Gamma\]
is refined to a chain 
\[\ldots \subseteq \Gamma'_{l+1}\subseteq \Gamma'_l \subseteq \ldots \subseteq \Gamma'_1\subseteq \Gamma'_0=\Gamma,\]
such that each $\Gamma'_l\setminus \Gamma'_{l+1}$ consists of a single weight $\mu'_l$, then 
\[C_{\leq \Gamma'_l}/C_{\leq \Gamma'_{l+1}} \simeq  H^0_G(\mu'_l)\otimes (V_G(\mu'_l)^*)^r\]
as $G$-superbimodules, and the chain
\[\ldots \subseteq C_{\leq \Gamma'_{l+1}}\subseteq C_{\leq \Gamma'_l} \subseteq \ldots \subseteq C_{\leq \Gamma'_1}\subseteq C_{\leq \Gamma'_0}=\mathcal{O}_{\Gamma}(K[G])\]
is a Donkin-Koppinen filtration of $\mathcal{O}_{\Gamma} (K[G])=O_{\Lambda}(K[G])$.
\end{tr}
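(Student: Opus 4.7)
The plan is to verify the theorem in three stages matching its three assertions: (i) each $C_{\leq \Gamma_k}$ is a locally finite-dimensional $G$-superbimodule, with $C_{\leq \Gamma_0}=C_\Gamma$ and $C_{\leq \Gamma_{k+1}}\subseteq C_{\leq \Gamma_k}$; (ii) the successive quotients decompose as the claimed direct sum of $H^0_G(\mu)\otimes (V_G(\mu)^*)^r$; and (iii) a single-weight refinement yields a genuine Donkin-Koppinen filtration. For stage (i), I would first show that each individual $C_{\leq \lambda}$ is a $G$-superbimodule. Since $\phi^*$ is a $G_{ev}$-morphism and each $M_{\leq \lambda-l\alpha}$ is a $G_{ev}$-bimodule by the previous proposition, each summand $\phi^*(K[Y_{21}]\otimes M_{\leq \lambda-l\alpha}\otimes K[Y_{12}])$ is automatically a $G_{ev}$-bimodule, generalizing Lemma~\ref{l6.4}. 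What remains is closure under the odd superderivations $_{kl}D$ and $D_{kl}$ for $(kl)\in I_{12}\cup I_{21}$; applying the Leibniz rule to a spanning element $\phi^*(y_{21}^a\cdot v\cdot y_{12}^b)$ with $v\in M_{\leq \lambda-l\alpha}$ and invoking Propositions~\ref{p12.2}--\ref{p12.3}, I would check that the derivation either differentiates an exterior factor or acts on $v$ itself, in each case at worst shifting $l$ by $\pm 1$. The decisive feature of the definition of $C_{\leq \lambda}$ is that the sum over \emph{all} $l\geq 0$ absorbs precisely these odd weight shifts. That $C_{\leq \Gamma_0}=C_\Gamma$ follows once one checks that the spanning set so constructed exhausts $O_\Gamma(K[G])$, which uses that $\mu-l\alpha\in\Gamma$ whenever $\mu\in\Gamma$ and $l\geq 0$. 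Local finite-dimensionality is immediate from the $T$-weight decomposition.

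For stage (ii), the pairwise incomparability of the generators of $\Gamma_k\setminus \Gamma_{k+1}$ gives
\[C_{\leq \Gamma_k}/C_{\leq \Gamma_{k+1}} \cong \bigoplus_{\mu\in \Gamma_k\setminus \Gamma_{k+1}} C_{\leq \mu}/C_{<\mu}.\]
In each summand only the $l=0$ contribution survives the quotient, so $C_{\leq \mu}/C_{<\mu}$ is $G_{ev}$-bimodule-isomorphic to $\phi^*(K[Y_{21}]\otimes (M_{\leq \mu}/M_{<\mu})\otimes K[Y_{12}])$, and the previous proposition identifies $M_{\leq \mu}/M_{<\mu}\simeq H^0_{G_{ev}}(\mu)\otimes (V_{G_{ev}}(\mu)^*)^r$. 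To upgrade this to a $G$-superbimodule isomorphism with $H^0_G(\mu)\otimes(V_G(\mu)^*)^r$, I would invoke Theorem~6.1 of \cite{sz}, which guarantees the existence of a good filtration of $O_\Gamma(K[G])$ with exactly these quotients; matching formal $T$-characters on both sides --- using the generalized-bideterminant basis tensored with the exterior algebras on $Y_{12}$ and $Y_{21}$ --- then pins down the bimodule structure by uniqueness of good filtrations with prescribed top factor.

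For stage (iii), each finite antichain $\Gamma_k\setminus \Gamma_{k+1}$ can be linearly ordered arbitrarily, yielding a refined chain $\Gamma'_l$ with single-weight differences $\mu'_l$. The quotients are $H^0_G(\mu'_l)\otimes (V_G(\mu'_l)^*)^r$ by (ii), and the Donkin-Koppinen order property~(\ref{2}) holds because any composition factor $L_G(\nu)$ of $H^0_G(\mu'_l)$ satisfies $\nu\unlhd \mu'_l$, while the order of Section~\ref{ss33} refines $\unlhd$. The hard part is stage (ii): the explicit odd-derivation formulae of Propositions~\ref{p12.2}--\ref{p12.3} supply exactly the information needed to match the $G$-action on $\phi^*(K[Y_{21}]\otimes (M_{\leq \mu}/M_{<\mu})\otimes K[Y_{12}])$ with that on $H^0_G(\mu)\otimes (V_G(\mu)^*)^r$, but doing so cleanly --- producing a canonical highest-weight vector and checking it generates the predicted induced supermodule --- is combinatorially heavy, and the character-matching route via Theorem~6.1 of \cite{sz} is likely the most efficient way to close this gap.
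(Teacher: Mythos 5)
Your three-stage plan matches the logical skeleton of the paper's argument, but the two middle stages diverge significantly in execution, and one of those divergences hides a genuine gap.

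In stage (i) you write that applying the odd derivations to a spanning element ``at worst shifts $l$ by $\pm 1$'' and that the sum over all $l\geq 0$ ``absorbs precisely these odd weight shifts.'' This is the right conclusion, but it is not a routine Leibniz-rule observation. The dangerous case is the cross term $(y_{ij})_{kl}D = \tfrac{y_{il}A_{kj}}{D}$ for $(ij)\in I_{21}$, $(kl)\in I_{12}$ (and its mirror $D_{kl}(y_{ij})=\tfrac{y_{kj}A_{il}}{D}$): this is not ``differentiating an exterior factor or acting on $v$,'' but produces a product $v^*\tfrac{y_{il}A_{kj}}{D}$ that does not obviously lie in any $\phi^*(K[Y_{21}]\otimes M_{\lambda,l'}\otimes K[Y_{12}])$. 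The paper's proof turns on a concrete combinatorial identification: writing $v=Det_1^{b}T^{\eta^+}(i^+:j^+)Det_2 T^{\eta^-}(i^-:j^-)$ with $\eta^+_m=1$, it shows that $v\cdot\tfrac{c_{il}A_{kj}}{D}$ is exactly the generalized bideterminant obtained by deleting the last box of $[\eta^+]$ and appending a box (filled with $k$ and $j$) to the first row of $[\eta^-]$, i.e.\ a generalized bideterminant of shape $\zeta-(l+1)\alpha$. Without this identification the invariance of $C_{\leq\lambda}$ under all of $\Dist(G)$ is not established; your sketch assumes the answer.

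In stage (ii) you take a genuinely different route. Rather than exhibiting an explicit basis of $C_{\leq\lambda}/C_{<\lambda}$, you propose to match formal $T$-characters against the good filtration guaranteed by Theorem~6.1 of \cite{sz} and appeal to rigidity of good filtrations to pin down the $G$-bimodule structure. This could be made to work, but it is the opposite of the paper's strategy: the paper shows that the quotient $C_{\leq\lambda}/C_{<\lambda}$ is generated as a $G$-superbimodule by the congruence class of the canonical vector $\phi^*\bigl(Det_1^{\lambda_m}Det_2^{\lambda_{m+n}}T^{\gamma^+}(\ell_{\gamma^+}:\ell_{\gamma^+})T^{\gamma^-}(\ell_{\gamma^-}:\ell_{\gamma^-})\bigr)$, and then uses the straightening algorithm to produce a $K$-basis of the quotient consisting of products $\prod y_{ij}^{\epsilon_{ij}}\cdot\phi^*(\text{generalized bideterminant})\cdot\prod y_{ij}^{\epsilon_{ij}}$ with semistandard tableaux, which is recognized directly as a basis of $H^0_G(\lambda)\otimes (V_G(\lambda)^*)^r$. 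Your character-matching argument, if carried through, would re-prove the existence of the filtration but would not deliver the paper's stated goal of a hands-on combinatorial description; the paper's approach buys an explicit basis and is self-contained, whereas yours leans on \cite{sz} for the very structure you are trying to describe. Stage (iii) is fine and matches the paper.
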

\begin{proof}
We prove the statement for the refined chain of ideals $\{\Gamma'_l\}$. The statement for the chain 
$\{\Gamma_k\}$ then follows from it.

Assume $\lambda=\mu'_l\in \Gamma'_l\setminus \Gamma'_{l+1}$. Then $(\lambda]\cap \Gamma'_{l+1}=(\lambda)\subseteq C_{\Gamma'_{l+1}}$.
It is enough to show that $C_{\leq \Gamma'_l}$ is a $G$-superbimodule and 
$C_{\leq \Gamma'_l}/C_{\leq \Gamma'_{l+1}}\simeq  H^0_G(\lambda)\otimes (V_G(\lambda)^*)^r$ as $G$-superbimodules.

We will show that $C_{\leq \lambda}$ is a left $G$-supermodule. Analogously, we can prove that it is a right $G$-supermodule.
It follows from Lemma \ref{l6.4} that $\phi^*(M_{\leq \lambda})\subset \phi^*(K[Y_{21}]\otimes M_{\lambda,l}\otimes K[Y_{12}])\subset C_{\leq \lambda}$.
Then Propositions \ref{p12.2} and \ref{p12.3} imply that
$\phi^*(K[Y_{21}]\otimes M_{\lambda,l}\otimes K[Y_{12}])$
is invariant under the left and right actions of $\Dist(G_{ev})$. 

For $v\in M_{\lambda,l}$, denote $v^*=\phi^*(v)$. 
Propositions \ref{p12.2} and \ref{p12.3} imply that if $(kl)\in I_{12}$, $v\in M_{\lambda,l}$ and $y=\prod_{t=1}^s y_{i_t,j_t}$ is such that each $(i_t,j_t)\in I_{12}$,  then 
\[(v^*y)_{kl}D, D_{kl}(v^*y)\in \phi^*(K[Y_{21}]\otimes M_{\lambda,l}\otimes K[Y_{12}])\subset C_{\leq \lambda}.\]
Analogously, if $(kl)\in I_{21}$, $v\in M_{\lambda,l}$ and $y=\prod_{t=1}^s y_{i_t,j_t}$ is such that each $(i_t,j_t)\in I_{21}$,  then 
\[(v^*y)_{kl}D, D_{kl}(v^*y)\in \phi^*(K[Y_{21}]\otimes M_{\lambda,l}\otimes K[Y_{12}])\subset C_{\leq \lambda}.\]

It remains to investigate the expressions 
$(v^*y_{ij})_{kl}D=-(v^*)_{kl}Dy_{ij}+v^*\frac{y_{il}A_{kj}}{D}$ 
for $v\in M_{\lambda,l}$,  $(ij)\in I_{21}$ and $(kl)\in I_{12}$;
and $D_{kl}(v^*y_{ij})=D_{kl}(v^*)y_{ij}+v^*\frac{y_{kj}A_{il}}{D}$ for $v\in M_{\lambda,l}$, $(ij)\in I_{12}$ and $(kl)\in I_{21}$.

A closer look at the product $v\frac{c_{il}A_{kj}}{D}$ for $(ij)\in I_{21}$ and $(kl)\in I_{12}$, where 
$v\in M_{\lambda,l}$ is a generalized bideterminant of the shape $\zeta-l\alpha$, and $\zeta\unlhd_{ev} \lambda$ reveals that it can be obtained in the following way.
Write $v$ in the form $v=Det_1^{b} T^{\eta^+}(i^+:j^+)Det_2T^{\eta^-}(i^-:j^-)$, where $\eta^+_m=1$. In this case, the first column of $T^{\eta^+}_{i^+}$ 
and $T^{\eta^+}_{j^+}$ consists of entries $1, \ldots, m$ written in the increasing order down the first column.

We change the ordered pair of partitions $(\eta^+,\eta^-)$ into $(\kappa^+,\kappa^-)$ by removing the last square from the diagram $[\eta^+]$ and adding a new square at the end of the first row of $[\eta^-]$.
That means $\kappa^+_i=\eta^+_i$ for $i=1, \ldots, m-1$, $\kappa_m=0$;
$\kappa^-_1=\eta^-_1+1$ and $\kappa^-_j=\eta^-_j$.
Define $T^{\kappa^+}_{\overline{i}^+}$ by the restriction of $T^{\eta^+}_{i^+}$, 
$T^{\kappa^+}_{\overline{j}^+}$ by the restriction of $T^{\eta^+}_{j^+}$,
$T^{\kappa^-}_{\overline{i}^-}$ by extending $T^{\eta^-}_{i^-}$ by adding an entry $k$ at the end of the first row, 
and $T^{\kappa^-}_{\overline{j}^-}$ by extending $T^{\eta^-}_{j^-}$ by adding an entry $j$ at the end of the first row.
Denote $w=Det_1^{b^+}T^{\kappa^+}(\overline{i}^+:\overline{j}^+)Det_2T^{\kappa^-}(\overline{i}^-:\overline{j}^-)$.
Then $w^*=v^*\frac{y_{il}A_{kj}}{D}$ is a generalized bideterminant of shape $\zeta-(l+1)\alpha$, which belongs 
to  $\phi^*(K[Y_{21}]\otimes M_{\lambda,l+1}\otimes K[Y_{12}])\subset C_{\leq \lambda}$.

The situation is analogous for $v^*\frac{y_{kj}A_{il}}{D}$ for $v\in M_{\lambda,l}$, $(ij)\in I_{12}$, and $(kl)\in I_{21}$.
This proves that $C_{\leq \lambda}$ is a $G$-superbimodule. Therefore, $C_{\Gamma'_l}$ is a superbimodule for every $l\geq 0$.
Analogously, $C_{<\lambda}$ is a $G$-superbimodule as well. 

For a dominant $G$-weight $\lambda$, denote 
\[\gamma=(\gamma^+|\gamma^-)=(\lambda_1-\lambda_m,\ldots, \lambda_m-\lambda_m=0|\lambda_{m+1}-\lambda_{m+n}, \ldots, \lambda_{m+n}-\lambda_{m+n}=0)\] and by $T^{\gamma^+}_{\ell_{\gamma^+}}$ and 
$T^{\gamma^-}_{\ell_{\gamma^-}}$
the canonical tableau of shape $\gamma^+$ and $\gamma^-$, respectively.
Further, denote by $v$ the vector 
\[v=Det_1^{\lambda_m}Det_2^{\lambda_{m+n}}T^{\gamma^+}(\ell_{\gamma^+}:\ell_{\gamma^+})
T^{\gamma^-}(\ell_{\gamma^-}:\ell_{\gamma^-}).\]
Then the congruence class of $v^*=\phi^*(v)$ generates $C_{\leq \lambda}/C_{<\lambda}$ as a $G$-superbimodule, 
and $C_{\leq \lambda}/C_{<\lambda}$ is given by congruence classes of vectors
\[\prod_{1\leq i \leq m <j \leq m+n} y_{ij}^{\epsilon_{ij}}\phi^*(Det_1^{\lambda_m}Det_2^{\lambda_{m+n}}T^{\gamma^+}(i^+:j^+)T^{\gamma^-}(i^-:j^-)) \prod_{1\leq j \leq m <i \leq m+n} y_{ij}^{\epsilon_{ij}},\]
each each $\epsilon_{ij}\in \{0,1\}$.
Using the straightening algorithm, it follows that the $K$-basis of $C_{\leq \lambda}/C_{<\lambda}$ consists of congruence classes of
vectors 
\[\prod_{1\leq i \leq m <j \leq m+n} y_{ij}^{\epsilon_{ij}} \phi^*(Det_1^{\lambda_m}Det_2^{\lambda_{m+n}}T^{\gamma^+}(i^+:j^+)T^{\gamma^-}(i^-:j^-)) \prod_{1\leq j \leq m <i \leq m+n} y_{ij}^{\epsilon_{ij}},\]
where each $\epsilon_{ij}\in \{0,1\}$ and $T^{\gamma^+}_{i^+,}T^{\gamma^+}_{j^+}$, $T^{\gamma^-}_{i^-}$, $T^{\gamma^-}_{j^-}$ 
 are semistandard tableaux.

Finally, $C_{\leq \lambda}/C_{<\lambda} \simeq H^0_G(\lambda)\otimes (V_G(\lambda)^*)^r$ as $G$-superbimodules.

Since $C_{\Gamma'_l}/C_{\Gamma'_{l+1}}\simeq C_{\leq \lambda}/C_{< \lambda}$ as $G$-superbimodules, the claim follows.
\end{proof}
 
\begin{rem}The above proof also shows that a $G_{ev}$-bimodule $\phi^*(K[Y_{21}]\otimes M_{\lambda,l}\otimes K[Y_{12}])$ is not invariant under the action of superderivations $_{kl}D$ for $(kl)\in I_{12}$ and $D_{kl}$ for $(kl)\in I_{21}$.
\end{rem}

\end{document}